\theoremstyle{plain}
\newtheorem{thm}{\protect\theoremname}
  \theoremstyle{remark}
  \newtheorem{rem}[thm]{\protect\remarkname}
  \theoremstyle{plain}
  \newtheorem{prop}[thm]{\protect\propositionname}
  \theoremstyle{plain}
  \newtheorem{lem}[thm]{\protect\lemmaname}
  \theoremstyle{plain}
  \newtheorem{cor}[thm]{\protect\corollaryname}
\tikzstyle{pathdefault}=[draw, line width=1, solid, color=black]
\tikzstyle{nodedefault}=[circle, inner sep=1.5, fill=black]
\tikzstyle{empty}=[]
\tikzstyle{nodeellipsis}=[circle, inner sep=0.5, fill=black]
\tikzstyle{pathcolor1}=[draw, line width=1.3, densely dashed, color=red]
\tikzstyle{pathcolor2}=[draw, line width=1.6, densely dotted, color=blue]
\tikzstyle{pathcolorlight}=[draw, line width=1, dotted, color=lightgray]
\tikzstyle{arbpathcolor0}=[line width=1, dashdotted, color=black]
\tikzstyle{arbpathcolor1}=[line width=1, densely dashed, color=red]
\tikzstyle{arbpathdefault}=[line width=1, densely dotted, color=blue]
\newcounter{id}
\newcommand{\drawlinedotswithstyle}[4]{
 \def\x{{#3}}
 \def\y{{#4}}
 \tikzstyle{thispathstyle}=[#1]
 \tikzstyle{thisnodestyle}=[#2]
 \setcounter{id}{-1} %start id at -1
 \foreach \j in {#3}{\stepcounter{id}} %id is one less than num of pts
 \foreach \i in {1,...,\the\value{id}}{  %loop through later indices
  \path[thispathstyle] (\x[\i],\y[\i]) --(\x[\i-1],\y[\i-1]); %draw edge
 }
 \foreach \i in {1,...,\the\value{id}}{  %loop through later indices
  \node[thisnodestyle] at (\x[\i],\y[\i]) {}; %draw node
 }
 \node[thisnodestyle] at (\x[0],\y[0]) {}; %draw first node outside of loop
}
\DeclareDocumentCommand{\drawlinedots}{ O{pathdefault} O{nodedefault} m m}{\drawlinedotswithstyle{#1}{#2}{#3}{#4}}
\let\originalleft\left
\let\originalright\right
\renewcommand{\left}{\mathopen{}\mathclose\bgroup\originalleft}
\renewcommand{\right}{\aftergroup\egroup\originalright}
\definecolor{mhcblue}{HTML}{0077CC} %official Mount Holyoke color
\definecolor{davidsonred}{HTML}{AC1A2F} %official Davidson College color
\definecolor{green}{RGB}{0, 180, 0}
\definecolor{yellow}{RGB}{180, 180, 0}
  \providecommand{\corollaryname}{Corollary}
  \providecommand{\lemmaname}{Lemma}
  \providecommand{\propositionname}{Proposition}
  \providecommand{\remarkname}{Remark}
\providecommand{\theoremname}{Theorem}
\begin{document}
\global\long\def\exc{\operatorname{exc}}
\global\long\def\des{\operatorname{des}}
\global\long\def\Cpk{\operatorname{Cpk}}
\global\long\def\Cval{\operatorname{Cval}}
\global\long\def\Cdasc{\operatorname{Cdasc}}
\global\long\def\Cddes{\operatorname{Cddes}}
\global\long\def\Fix{\operatorname{Fix}}
\global\long\def\Exc{\operatorname{Exc}}
\global\long\def\cpk{\operatorname{cpk}}
\global\long\def\cval{\operatorname{cval}}
\global\long\def\cddes{\operatorname{cddes}}
\global\long\def\cdasc{\operatorname{cdasc}}
\global\long\def\Orb{\operatorname{Orb}}
\global\long\def\fix{\operatorname{fix}}
\global\long\def\pk{\operatorname{pk}}

\title{On the joint distribution of cyclic valleys and excedances over conjugacy
classes of $\mathfrak{S}_{n}$}

\author{M.\ Crossan Cooper\qquad{}William S.\ Jones\qquad{}Yan Zhuang\\
Department of Mathematics and Computer Science\\
Davidson College\texttt{}~\\
\texttt{\{crcooper, wijones, yazhuang\}@davidson.edu}}
\maketitle
\begin{abstract}
We derive a formula expressing the joint distribution of the cyclic
valley number and excedance number statistics over a fixed conjugacy
class of the symmetric group in terms of Eulerian polynomials. Our
proof uses a slight extension of Sun and Wang's cyclic valley-hopping
action as well as a formula of Brenti. Along the way, we give a new
proof for the $\gamma$-positivity of the excedance number distribution
over any fixed conjugacy class along with a combinatorial interpretation
of the $\gamma$-coefficients.
\end{abstract}
\textbf{\small{}Keywords:}{\small{} permutation statistics, excedances,
cyclic valleys, Eulerian polynomials, $\gamma$-positivity, modified
Foata\textendash Strehl action}{\let\thefootnote\relax\footnotetext{2010 \textit{Mathematics Subject Classification}. Primary 05A15; Secondary 05A05, 05E18.}}

\section{Introduction}

Let $\pi=\pi(1)\pi(2)\cdots\pi(n)$ be a permutation in the symmetric
group $\mathfrak{S}_{n}$. We say that $i\in[n-1]$ is a \textit{descent}
of $\pi$ if $\pi(i)>\pi(i+1)$ and that $i\in[n]$ is an \textit{excedance}
of $\pi$ if $i<\pi(i)$.\footnote{The set $[n]$ is defined by $[n]\coloneqq\{1,2,\dots,n\}$.}
We let $\des(\pi)$ denote the number of descents of $\pi$ and $\exc(\pi)$
the number of excedances of $\pi$. For example, if $\pi=371896542$,
then the descents of $\pi$ are 2, 5, 6, 7, and 8 whereas the excedances
of $\pi$ are 1, 2, 4, and 5; thus $\des(\pi)=5$ and $\exc(\pi)=4$.
It is well known that the descent number $\des$ and the excedance
number $\exc$ have the same distribution over $\mathfrak{S}_{n}$,
that is, the number of permutations in $\mathfrak{S}_{n}$ with exactly
$k$ descents is equal to the number of permutations in $\mathfrak{S}_{n}$
with exactly $k$ excedances.

Given a polynomial $f$ in the variable $t$, we say that $f$ is
\textit{$\gamma$-positive} with center of symmetry $n/2$ if we can
write
\[
f(t)=\sum_{i=0}^{\left\lfloor n/2\right\rfloor }\gamma_{i}t^{i}(1+t)^{n-2i}
\]
for some non-negative integers $\gamma_{i}$ (called the $\gamma$-coefficients
of $f$). If a polynomial is $\gamma$-positive, then its sequence
of coefficients is symmetric and unimodal. The prototypical example
of a family of $\gamma$-positive polynomials are the Eulerian polynomials
$\{A_{n}(t)\}_{n\geq0}$ defined by
\[
A_{n}(t)\coloneqq\sum_{\pi\in\mathfrak{S}_{n}}t^{\des(\pi)+1}=\sum_{\pi\in\mathfrak{S}_{n}}t^{\exc(\pi)+1}
\]
for $n\geq1$ and by $A_{0}(t)\coloneqq1$. The $n$th Eulerian polynomial
encodes the distribution of the descent number (equivalently, the
excedance number) over $\mathfrak{S}_{n}$. The $\gamma$-positivity
of Eulerian polynomials was proven by Foata and Sch{\"u}tzenberger
\cite{Foata1970} in 1970, long before the term ``$\gamma$-positivity''
was coined, but the general notion of $\gamma$-positivity has emerged
as a powerful way to prove unimodality results and has connections
to many facets of enumerative, algebraic, and geometric combinatorics.
See \cite{Athanasiadis2017} for a comprehensive survey on this topic.

The \textit{cycle type} of a permutation $\pi$ is a partition of
$n$ encoding the number of cycles of $\pi$ of each size. Continuing
with the earlier example, the permutation $\pi=371896542$ in one-line
notation can be written as $\pi=(3,1)(6)(8,4)(9,2,7,5)$ in cycle
notation, which has cycle type $\lambda=(1,2,2,4)$. Conjugacy classes
of the symmetric group $\mathfrak{S}_{n}$ correspond to sets of permutations
with a fixed cycle type, and one may investigate distributions of
permutation statistics over conjugacy classes. Perhaps the most famous
result in this domain is by Gessel and Reutenauer \cite{Gessel1993},
who proved that the number of permutations in a prescribed conjugacy
class with a prescribed descent set is equal to the scalar product
of a ribbon Schur function and a Lyndon symmetric function (equivalently,
the scalar product of the characters of a Foulkes representation and
a Lie representation). 

We say that $i\in[n]$ is a \textit{cyclic valley} of $\pi\in\mathfrak{S}_{n}$
if $\pi^{-1}(i)>i<\pi(i)$, and we let $\cval(\pi)$ denote the number
of cyclic valleys of $\pi$. In this paper, we will study the polynomials
\begin{gather*}
E_{\lambda}(t)\coloneqq\sum_{\pi\in\mathfrak{S}_{n}(\lambda)}t^{\exc(\pi)},\qquad E_{\lambda}^{\cval}(t)\coloneqq\sum_{\pi\in\mathfrak{S}_{n}(\lambda)}t^{\cval(\pi)},
\end{gather*}
and 
\[
E_{\lambda}^{(\cval,\exc)}(s,t)\coloneqq\sum_{\pi\in\mathfrak{S}_{n}(\lambda)}s^{\cval(\pi)}t^{\exc(\pi)},
\]
where $\mathfrak{S}_{n}(\lambda)$ is the conjugacy class of $\mathfrak{S}_{n}$
consisting of all permutations with cycle type $\lambda$. While the
polynomials $E_{\lambda}^{\cval}(t)$ and $E_{\lambda}^{(\cval,\exc)}(s,t)$
appear to be new, the $E_{\lambda}(t)$ were studied earlier by Brenti.
For a partition $\lambda=(1^{m_{1}}2^{m_{2}}\cdots)$ of $n$\textemdash that
is, a partition with $m_{i}$ parts of size $i$ for each $i$\textemdash Brenti
\cite[Theorem 3.1]{Brenti1993} proved the formula
\begin{equation}
E_{\lambda}(t)=\frac{n!}{z_{\lambda}}\prod_{i\geq1}\left[\frac{A_{i-1}(t)}{(i-1)!}\right]^{m_{i}}\label{e-brenti}
\end{equation}
where the constant $z_{\lambda}$ is defined by $z_{\lambda}\coloneqq\prod_{i\geq1}i^{m_{i}}m_{i}!$.
Since Eulerian polynomials are $\gamma$-positive and products of
$\gamma$-positive polynomials are $\gamma$-positive \cite[Observation 4.1]{Petersen2015},
Brenti's formula implies that the polynomials $E_{\lambda}(t)$ are
$\gamma$-positive as well.

Our main result is the following formula:
\begin{thm}
\label{t-mainthm} Let $\lambda=(1^{m_{1}}2^{m_{2}}\cdots)$ be a
partition of $n$. Then
\[
E_{\lambda}^{(\cval,\exc)}(s,t)=\frac{n!}{z_{\lambda}}\left(\frac{1+u}{1+uv}\right)^{n-m_{1}}\prod_{i\geq1}\left[\frac{A_{i-1}(v)}{(i-1)!}\right]^{m_{i}}
\]
where $u=\frac{1+t^{2}-2st-(1-t)\sqrt{(1+t)^{2}-4st}}{2(1-s)t}$ and
$v=\frac{(1+t)^{2}-2st-(1+t)\sqrt{(1+t)^{2}-4st}}{2st}.$
\end{thm}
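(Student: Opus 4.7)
The plan is to combine the $\gamma$-positivity framework provided by the cyclic valley-hopping action with Brenti's formula (\ref{e-brenti}) via a change of variables. First I would define (or suitably extend) Sun and Wang's cyclic valley-hopping action on $\mathfrak{S}_n$ and verify that it restricts to an involution on each conjugacy class $\mathfrak{S}_n(\lambda)$ which preserves both the cycle type and the cyclic valley number $\cval$. Its orbits then partition $\mathfrak{S}_n(\lambda)$; within an orbit with $\cval = k$, the $m_1$ fixed points and the $2k$ cyclic peaks and valleys are locked, while each of the remaining $n - m_1 - 2k$ positions (all cyclic double ascents or cyclic double descents) can be independently toggled, so the orbit has size $2^{n-m_1-2k}$.

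The key computation is that within such an orbit the excedance generating polynomial equals $t^{k}(1+t)^{n-m_1-2k}$: since $\exc = \cval + \cdasc$, the $k$ cyclic valleys always contribute a factor of $t^k$, and each of the $n - m_1 - 2k$ free positions independently contributes $t$ when it is a cdasc or $1$ when it is a cddes. Letting $\gamma_{\lambda,k}$ denote the number of orbits in $\mathfrak{S}_n(\lambda)$ with cyclic valley number $k$, summing over orbits yields
\[
E_{\lambda}^{(\cval,\exc)}(s,t) = \sum_{k\ge 0} \gamma_{\lambda,k}\, (st)^{k}(1+t)^{n-m_1-2k},
\]
which simultaneously establishes the $\gamma$-positivity of $E_\lambda(t)$ and supplies the combinatorial interpretation of its $\gamma$-coefficients advertised in the abstract.

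To deduce the theorem I would factor out $(1+t)^{n-m_1}$ to write $E_{\lambda}^{(\cval,\exc)}(s,t) = (1+t)^{n-m_1} G_\lambda\!\bigl(st/(1+t)^{2}\bigr)$, where $G_\lambda(x) \coloneqq \sum_k \gamma_{\lambda,k}\, x^{k}$ depends only on $\lambda$. Setting $s=1$ and comparing with Brenti's formula identifies $G_\lambda(t/(1+t)^{2}) = E_\lambda(t)/(1+t)^{n-m_1}$. Next I would define $v = v(s,t)$ implicitly by $v/(1+v)^{2} = st/(1+t)^{2}$, which is a quadratic in $v$ with discriminant $(1+t)^{2}\bigl((1+t)^{2}-4st\bigr)$; the root selected in the theorem is the smaller one. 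This substitution yields
\[
E_{\lambda}^{(\cval,\exc)}(s,t) = \left(\frac{1+t}{1+v}\right)^{n-m_1} E_\lambda(v),
\]
and plugging in Brenti's formula for $E_\lambda(v)$ produces the right-hand side of the theorem except with the prefactor $(1+t)/(1+v)$ in place of $(1+u)/(1+uv)$.

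The final step is to set $u \coloneqq (t-v)/(1-tv)$ and check the identity $(1+t)/(1+v) = (1+u)/(1+uv)$, which rearranges to $t(1+uv) = u+v$ and is immediate by direct substitution. A rationalization of $(t-v)/(1-tv)$ using the explicit expression for $v$, together with the easily verified identity $R^{2} - (1+t-2s)^{2} = 4s(1-s)$ where $R = \sqrt{(1+t)^{2}-4st}$, reduces to the formula for $u$ in the theorem. I expect the main obstacle to lie in the orbit analysis: verifying that the slight extension of Sun and Wang's action behaves correctly on fixed points and across multiple cycles so that the orbit sizes and per-orbit excedance polynomials come out as claimed. Once the $\gamma$-positive expansion is in hand, the remaining algebra is a transparent substitution.
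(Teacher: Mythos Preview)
Your proposal is correct and rests on the same combinatorial foundation as the paper: extend Sun--Wang's cyclic valley-hopping to all of $\mathfrak{S}_n$, verify that conjugacy classes are invariant, compute the per-orbit excedance polynomial, and then combine with Brenti's formula via a change of variables.

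The organization of the algebraic half differs slightly from the paper's, and your route is arguably cleaner. The paper first establishes a two-variable orbit identity (its Lemma~\ref{l-excorb}) expressing $\sum_{\pi\in\Orb(\sigma)} t^{\exc(\pi)}$ as a sum of terms involving both $s$ and $t$; summing over orbits yields a relation of the form
\[
E_\lambda(t)=\Bigl(\tfrac{1+st}{1+s}\Bigr)^{n-m_1}E_\lambda^{(\cval,\exc)}\!\Bigl(\tfrac{(1+s)^2 t}{(s+t)(1+st)},\,\tfrac{s+t}{1+st}\Bigr),
\]
which is then inverted (by computer algebra) to obtain the $u,v$ formulas. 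You instead go straight to the one-variable orbit computation $\sum_{\pi\in\Orb(\sigma)} t^{\exc(\pi)}=t^{k}(1+t)^{n-m_1-2k}$, package everything into the single generating function $G_\lambda$, and solve $v/(1+v)^2=st/(1+t)^2$ directly. Your definition $u=(t-v)/(1-tv)$ together with the check $t(1+uv)=u+v$ is a nice way to recover the paper's prefactor without a CAS. The paper's two-variable lemma buys a bit more generality (it applies to any $\Pi\subseteq\mathfrak{S}_{n,k}$ invariant under the action, not just conjugacy classes), but for Theorem~\ref{t-mainthm} itself your path is shorter.
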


This formula allows one to compute the joint distribution of the statistics
$\cval$ and $\exc$ over any fixed conjugacy class directly from
Eulerian polynomials. For example, take $\lambda=(1,5,5)$. Then 
\begin{align*}
E_{\lambda}^{(\cval,\exc)}(s,t) & =\frac{11!}{1^{1}1!\cdot5^{2}2!}\left(\frac{1+u}{1+uv}\right)^{10}\frac{A_{0}(v)A_{4}(v)^{2}}{0!4!^{2}}\\
 & =1386\left(\frac{1+u}{1+uv}\right)^{10}(v+11v^{2}+11v^{3}+v^{4})^{2}\\
 & =(1386t^{2}+8316t^{3}+20790t^{4}+27720t^{5}+20790t^{6}+8316t^{7}+1386t^{8})s^{2}\\
 & \qquad\qquad\qquad\;\;\,+(22176t^{3}+88704t^{4}+133056t^{5}+88704t^{6}+22176t^{7})s^{3}\\
 & \qquad\qquad\qquad\qquad\qquad\qquad\qquad\quad\;\;+(88704t^{4}+177408t^{5}+88704t^{6})s^{4}.
\end{align*}
(The last equality can be verified using a computer algebra system
such as Maple.)
\begin{rem}
\label{r-unisym} Notice that, in the above example, the coefficient
of $s^{i}$ for each $i$ is a unimodal and symmetric polynomial in
$t$; we will see that this is true for all $E_{\lambda}^{(\cval,\exc)}(s,t)$.
\end{rem}

To prove Theorem \ref{t-mainthm}, we will first extend a group action
of Sun and Wang \cite{Sun2014} defined on derangements (permutations
without fixed points) to all permutations; we call this action ``cyclic
valley-hopping''. We will then prove a technical lemma (Lemma \ref{l-excorb})
using cyclic valley-hopping which is in turn utilized to derive a
formula expressing the polynomial $E_{\lambda}^{(\cval,\exc)}(s,t)$
in terms of the polynomial $E_{\lambda}(t)$. We then combine this
formula with Brenti's formula (\ref{e-brenti}) to yield Theorem \ref{t-mainthm}
and derive a similar result (Theorem \ref{t-mainresult2}) expressing
the polynomial $E_{\lambda}^{\cval}(t)$ in terms of Eulerian polynomials.
Along the way, we use Lemma \ref{l-excorb} to obtain an alternative
proof for the $\gamma$-positivity of the polynomials $E_{\lambda}(t)$
in a way which yields a combinatorial interpretation for their $\gamma$-coefficients
and which will explain our observation in Remark \ref{r-unisym}.

\section{Preliminaries}

\subsection{Permutation statistics}

We begin with a brief discussion of several more permutation statistics
which will arise in this paper. Given a permutation $\pi=\pi(1)\pi(2)\cdots\pi(n)$
in $\mathfrak{S}_{n}$, we say that $\pi(i)$ is:
\begin{itemize}
\item a \textit{valley }of $\pi$ if $\pi(i-1)>\pi(i)<\pi(i+1)$;
\item a \textit{peak} of $\pi$ if $\pi(i-1)<\pi(i)>\pi(i+1)$;
\item a \textit{double ascent} of $\pi$ if $\pi(i-1)<\pi(i)<\pi(i+1)$;
\item a \textit{double descent} of $\pi$ if $\pi(i-1)>\pi(i)>\pi(i+1)$.
\end{itemize}
In our work, we will be more concerned with cyclic analogues of these
notions which have also been well-studied, e.g., in \cite{Shin2012,Sun2014,Tirrell2018,Zeng1993}.
We have already defined excedances and cyclic valleys of $\pi$. We
say that $i\in[n]$ is:
\begin{itemize}
\item a \textit{cyclic peak} of $\pi$ if $\pi^{-1}(i)<i>\pi(i)$;
\item a \textit{cyclic double ascent} of $\pi$ if $\pi^{-1}(i)<i<\pi(i)$;
\item a \textit{cyclic double descent} of $\pi$ if $\pi^{-1}(i)>i>\pi(i)$;
\item a \textit{fixed point} of $\pi$ if $\pi(i)=i$.
\end{itemize}
It is clear that every cycle of size one is a fixed point, and that
in any cycle of size at least two, the first letter is a cyclic peak
and the last letter is either a cyclic valley or a cyclic double ascent.

Define $\Exc(\pi)$, $\Cval(\pi)$, $\Cpk(\pi)$, $\Cdasc(\pi)$,
$\Cddes(\pi)$, and $\Fix(\pi)$ to be the set of excedances, cyclic
valleys, cyclic peaks, cyclic double ascents, cyclic double descents,
and fixed points, respectively. Moreover, let $\cpk(\pi)\coloneqq\left|\Cpk(\pi)\right|$,
$\cdasc(\pi)\coloneqq\left|\Cdasc(\pi)\right|$, $\cddes(\pi)\coloneqq\left|\Cddes(\pi)\right|$,
and $\fix(\pi)\coloneqq\left|\Fix(\pi)\right|$. 

As an example, take $\pi=(5,2,1)(6)(8)(11,9,10,4,3,7)$. Here $\Exc(\pi)=\{1,3,7,9\}$,
$\Cval(\pi)=\{1,3,9\}$, $\Cpk(\pi)=\{5,10,11\}$, $\Cdasc(\pi)=\{7\}$,
$\Cddes(\pi)=\{2,4\}$, and $\Fix(\pi)=\{6,8\}$. Thus $\exc(\pi)=4$,
$\cval(\pi)=3$, $\cpk(\pi)=3$, $\cdasc(\pi)=1$, $\cddes(\pi)=2$,
and $\fix(\pi)=2$. 

It is clear from the definitions that every letter of a permutation
is either a cyclic valley, cyclic peak, cyclic double ascent, cyclic
double descent, or fixed point. Thus, we have 
\[
\Cval(\pi)\cup\Cpk(\pi)\cup\Cdasc(\pi)\cup\Cddes(\pi)\cup\Fix(\pi)=[n]
\]
and 
\begin{equation}
\cval(\pi)+\cpk(\pi)+\cdasc(\pi)+\cddes(\pi)+\fix(\pi)=n\label{e-ndecomp}
\end{equation}
for any $\pi\in\mathfrak{S}_{n}$. It is also clear that the excedances
of a permutation are precisely its cyclic valleys and cyclic double
ascents, that is, 

\begin{equation}
\Cval(\pi)\cup\Cdasc(\pi)=\Exc(\pi)\label{e-excdecompset}
\end{equation}
and 
\begin{equation}
\cval(\pi)+\cdasc(\pi)=\exc(\pi)\label{e-excdecomp}
\end{equation}
for all $\pi\in\mathfrak{S}_{n}$. Finally, it is not difficult to
see that, for any $\pi\in\mathfrak{S}_{n}$, the sets $\Cpk(\pi)$
and $\Cval(\pi)$ are in bijection. Hence, we have
\begin{equation}
\cpk(\pi)=\cval(\pi).\label{e-cpkcval}
\end{equation}

Before continuing, we give a couple remarks on cycle notation. When
writing permutations in cycle notation, we adopt the convention of
writing each cycle with its largest letter in the first position,
and writing the cycles from left-to-right in increasing order of their
largest letters. (This convention is sometimes called \textit{canonical
cycle representation}.) For example, the permutation $\pi=649237185$
in one-line notation is written as $\pi=(42)(716)(8)(953)$ in cycle
notation.

We will make use of a map called Foata's ``transformation fondamentale'';
this map $o\colon\mathfrak{S}_{n}\rightarrow\mathfrak{S}_{n}$ is
defined by taking as input a permutation in canonical cycle representation
and the output is the permutation in one-line notation obtained by
erasing the parentheses. Continuing the example with $\pi=(42)(716)(8)(953)$,
we have $o(\pi)=427168953$. It is easy to see that this map is a
bijection; we can recover the cycles of $o^{-1}(\pi)$ from a permutation
$\pi$ by noting the \textit{left-to-right maxima} of $\pi$: letters
$\pi(i)$ for which $\pi(j)<\pi(i)$ for all $1\leq j<i$.

\subsection{Cyclic valley-hopping}

Our remaining goal in this preliminary section is to define a group
action on $\mathfrak{S}_{n}$ induced by involutions which toggle
between cyclic double ascents and cyclic double descents. Before we
define this group action, it will be convenient to first define two
related group actions. Fix a permutation $\pi\in\mathfrak{S}_{n}$
and a letter $x\in[n]$. We may write $\pi=w_{1}w_{2}xw_{4}w_{5}$
where $w_{2}$ is the maximal consecutive subword immediately to the
left of $x$ whose letters are all smaller than $x$, and $w_{4}$
is the maximal consecutive subword immediately to the right of $x$
whose letters are all smaller than $x$; this decomposition is called
the $x$\textit{-factorization} of $\pi$. For example, if $\pi=834279156$
and $x=7$, then $\pi$ is the concatenation of $w_{1}=8$, $w_{2}=342$,
$x=7$, the empty word $w_{4}$, and $w_{5}=9156$.

Define $\varphi_{x}\colon\mathfrak{S}_{n}\rightarrow\mathfrak{S}_{n}$
by 
\[
\varphi_{x}(\pi)\coloneqq\begin{cases}
w_{1}w_{4}xw_{2}w_{5}, & \mbox{if }x\mbox{ is a double ascent or double descent of \ensuremath{\pi},}\\
\pi, & \mbox{if }x\mbox{ is a peak or valley of \ensuremath{\pi}.}
\end{cases}
\]
(Here, we are using the conventions $\pi(0)=\pi(n+1)=\infty$.) Equivalently,
$\varphi_{x}(\pi)=w_{1}w_{4}xw_{2}w_{5}$ if exactly one of $w_{2}$
and $w_{4}$ is nonempty, and $\varphi_{x}(\pi)=\pi$ otherwise. It
is easy to see that $\varphi_{x}$ is an involution, and that for
all $x,y\in[n]$, the involutions $\varphi_{x}$ and $\varphi_{y}$
commute with each other. Given a subset $S\subseteq[n]$, we define
$\varphi_{S}\colon\mathfrak{S}_{n}\rightarrow\mathfrak{S}_{n}$ by
$\varphi_{S}\coloneqq\prod_{x\in S}\varphi_{x}$. For example, given
$\pi=834279156$ and $S=\{6,7,8\}$, we have $\varphi_{S}(\pi)=734289615$;
see Figure 1. The involutions $\{\varphi_{S}\}_{S\subseteq[n]}$ define
a $\mathbb{Z}_{2}^{n}$-action on $\mathfrak{S}_{n}$ which is commonly
known as the \textit{modified Foata\textendash Strehl action} or \textit{valley-hopping}.
This action is based on a classical group action of Foata and Strehl
\cite{Foata1974}, was introduced by Shapiro, Woan, and Getu \cite{Shapiro1983},
and later rediscovered by Br\"and\'en \cite{Braenden2008}.
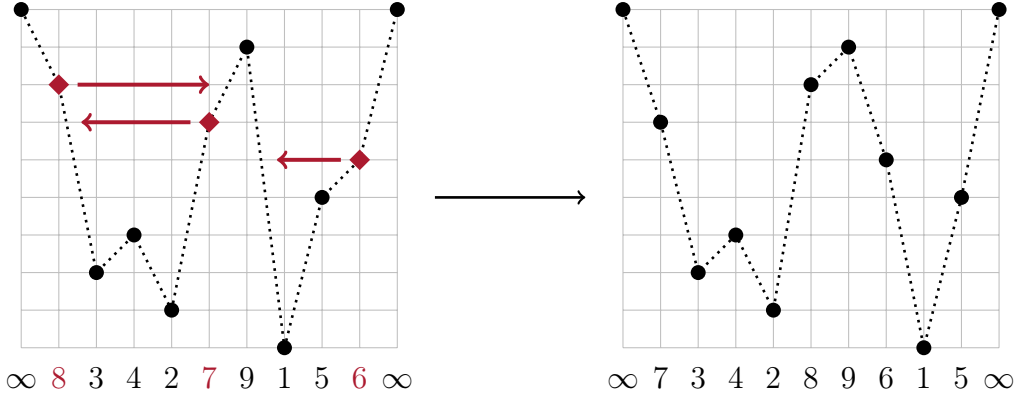
\begin{figure}
\begin{center}
\begin{tikzpicture}[scale=0.5] 	
\draw[step=1,lightgray,thin] (0,1) grid (10,10); 
	\tikzstyle{ridge}=[draw, line width=1, dotted, color=black] 
	\path[ridge] (0,10)--(1,8)--(2,3)--(3,4)--(4,2)--(5,7)--(6,9)--(7,1)--(8,5)--(9,6)--(10,10); 
	\tikzstyle{node0}=[circle, inner sep=2, fill=black] 
	\tikzstyle{node1}=[rectangle, inner sep=3, fill=mhcblue] 
	\tikzstyle{node2}=[diamond, inner sep=2, fill=davidsonred] 
	\node[node0] at (0,10) {}; 
	\node[node2] at (1,8) {}; 
	\node[node0] at (2,3) {}; 
	\node[node0] at (3,4) {}; 
	\node[node0] at (4,2) {}; 
	\node[node2] at (5,7) {}; 
	\node[node0] at (6,9) {}; 
	\node[node0] at (7,1) {}; 
	\node[node0] at (8,5) {}; 
	\node[node2] at (9,6) {}; 
	\node[node0] at (10,10) {}; 
	\tikzstyle{hop1}=[draw, line width = 1.5, color=davidsonred,->]
	\tikzstyle{hop2}=[draw, line width = 1.5, color=davidsonred,<-] 
	\path[hop1] (1.5,8)--(5,8);
	\path[hop2] (1.6,7)--(4.5,7);
	\path[hop2] (6.8,6)--(8.5,6); 
	\tikzstyle{pi}=[above=-5] 
	\node[pi] at (0,0) {$\infty$}; 
	\node[pi, color=davidsonred] at (1,0) {8}; 
	\node[pi] at (2,0) {3}; 
	\node[pi] at (3,0) {4}; 
	\node[pi] at (4,0) {2}; 
	\node[pi, color=davidsonred] at (5,0) {7}; 
	\node[pi] at (6,0) {9}; 
	\node[pi] at (7,0) {1}; 
	\node[pi] at (8,0) {5}; 
	\node[pi, color=davidsonred] at (9,0) {6}; 
	\node[pi] at (10,0) {$\infty$}; 
	\path[draw,line width=1,->] (11,5)--(15,5); 
	\begin{scope}[shift={(16,0)}] 
	\draw[step=1,lightgray,thin] (0,1) grid (10,10); 
	\path[ridge] (0,10)--(1,7)--(2,3)--(3,4)--(4,2)--(5,8)--(6,9)--(7,6)--(8,1)--(9,5)--(10,10); 
	\node[node0] at (0,10) {}; 
	\node[node0] at (1,7) {}; 
	\node[node0] at (2,3) {}; 
	\node[node0] at (3,4) {}; 
	\node[node0] at (4,2) {}; 
	\node[node0] at (5,8) {}; 
	\node[node0] at (6,9) {}; 
	\node[node0] at (7,6) {}; 
	\node[node0] at (8,1) {}; 
	\node[node0] at (9,5) {}; 
	\node[node0] at (10,10) {}; 
	\node[pi] at (0,0) {$\infty$}; 
	\node[pi] at (1,0) {7}; 
	\node[pi] at (2,0) {3}; 
	\node[pi] at (3,0) {4}; 
	\node[pi] at (4,0) {2}; 
	\node[pi] at (5,0) {8}; 
	\node[pi] at (6,0) {9};
	\node[pi] at (7,0) {6}; 
	\node[pi] at (8,0) {1}; 
	\node[pi] at (9,0) {5}; 
	\node[pi] at (10,0) {$\infty$}; 
	\end{scope}
\end{tikzpicture}
\end{center}

\caption{Valley-hopping on $\pi=834279156$ with $S=\{6,7,8\}$ yields $\varphi_{S}(\pi)=734289615$}
\end{figure}

Next, we define a group action due to Sun and Wang \cite{Sun2014}
which is an analogue of valley-hopping for derangements in cycle notation.
Let $\mathfrak{D}_{n}$ be the set of derangements of length $n$.
Define $\theta_{x}\colon\mathfrak{D}_{n}\rightarrow\mathfrak{D}_{n}$
by $\theta_{x}(\pi)\coloneqq o^{-1}(\varphi_{x}(o(\pi)))$, where
we treat the $0$th letter of $o(\pi)$ as 0 and the $(n+1)$th letter
as $\infty$. As with the functions $\varphi_{x}$, the functions
$\theta_{x}$ are involutions that commute with each other. Similarly,
for a subset $S\subseteq[n]$, define $\theta_{S}\colon\mathfrak{D}_{n}\rightarrow\mathfrak{D}_{n}$
by $\theta_{S}\coloneqq\prod_{x\in S}\theta_{x}$. Then Sun and Wang's
\textit{cyclic modified Foata\textendash Strehl action} is the $\mathbb{Z}_{2}^{n}$-action
defined by the involutions $\theta_{S}$.

Sun and Wang's action can easily be extended to all permutations;
simply define $\psi_{x}\colon\mathfrak{S}_{n}\rightarrow\mathfrak{S}_{n}$
by 
\[
\psi_{x}(\pi)\coloneqq\begin{cases}
o^{-1}(\varphi_{x}(o(\pi))), & \mbox{if }x\mbox{ is not a fixed point of \ensuremath{\pi},}\\
\pi, & \mbox{if }x\mbox{ is a fixed point of }\pi,
\end{cases}
\]
where, as before, we treat the $0$th letter of $o(\pi)$ as 0 and
the $(n+1)$th letter as $\infty$. Given a subset $S\subseteq[n]$,
define $\psi_{S}\colon\mathfrak{S}_{n}\rightarrow\mathfrak{S}_{n}$
by $\psi_{S}\coloneqq\prod_{x\in S}\psi_{x}$. For example, given
$\pi=(523)(8)(97641)$ and $S=\{3,7\}$, we have $\psi_{S}(\pi)=(532)(8)(96417)$;
see Figure 2. In what follows, we will call the $\mathbb{Z}_{2}^{n}$-action
defined by the involutions $\{\psi_{S}\}_{S\subseteq[n]}$ \textit{cyclic
valley-hopping}.
\begin{figure}
\begin{center}
\begin{tikzpicture}[scale=0.5] 	
\draw[step=1,lightgray,thin] (0,0) grid (10,10); 
	\tikzstyle{ridge}=[draw, line width=1, dotted, color=black] 
	\path[ridge] (0,0)--(1,5)--(2,2)--(3,3)--(4,8)--(5,9)--(6,7)--(7,6)--(8,4)--(9,1)--(10,10); 
	\tikzstyle{node0}=[circle, inner sep=2, fill=black] 
	\tikzstyle{node1}=[rectangle, inner sep=3, fill=mhcblue] 
	\tikzstyle{node2}=[diamond, inner sep=2, fill=davidsonred] 
	\node[node0] at (0,0) {}; 
	\node[node0] at (1,5) {}; 
	\node[node0] at (2,2) {}; 
	\node[node2] at (3,3) {}; 
	\node[node0] at (4,8) {}; 
	\node[node0] at (5,9) {}; 
	\node[node2] at (6,7) {}; 
	\node[node0] at (7,6) {}; 
	\node[node0] at (8,4) {}; 
	\node[node0] at (9,1) {}; 
	\node[node0] at (10,10) {}; 
	\tikzstyle{hop1}=[draw, line width = 1.5, color=davidsonred,->]
	\tikzstyle{hop2}=[draw, line width = 1.5, color=davidsonred,<-] 
	\path[hop2] (2,3)--(2.5,3);
	\path[hop1] (6.5,7)--(9.1,7);
	\tikzstyle{pi}=[above=-20] 
	\node[pi] at (0,0) {0}; 
	\node[pi] at (1,0) {5}; 
	\node[pi] at (2,0) {2}; 
	\node[pi, color=davidsonred] at (3,0) {3}; 
	\node[pi] at (4,0) {8}; 
	\node[pi] at (5,0) {9}; 
	\node[pi, color=davidsonred] at (6,0) {7}; 
	\node[pi] at (7,0) {6}; 
	\node[pi] at (8,0) {4}; 
	\node[pi] at (9,0) {1}; 
	\node[pi] at (10,0) {$\infty$}; 
	\path[draw,line width=1,->] (11,5)--(15,5); 
	\begin{scope}[shift={(16,0)}] 
	\draw[step=1,lightgray,thin] (0,0) grid (10,10); 
	\path[ridge] (0,0)--(1,5)--(2,3)--(3,2)--(4,8)--(5,9)--(6,6)--(7,4)--(8,1)--(9,7)--(10,10); 
	\node[node0] at (0,0) {}; 
	\node[node0] at (1,5) {}; 
	\node[node0] at (2,3) {}; 
	\node[node0] at (3,2) {}; 
	\node[node0] at (4,8) {}; 
	\node[node0] at (5,9) {}; 
	\node[node0] at (6,6) {}; 
	\node[node0] at (7,4) {}; 
	\node[node0] at (8,1) {}; 
	\node[node0] at (9,7) {}; 
	\node[node0] at (10,10) {}; 
	\node[pi] at (0,0) {0}; 
	\node[pi] at (1,0) {5}; 
	\node[pi] at (2,0) {3}; 
	\node[pi] at (3,0) {2}; 
	\node[pi] at (4,0) {8}; 
	\node[pi] at (5,0) {9}; 
	\node[pi] at (6,0) {6};
	\node[pi] at (7,0) {4}; 
	\node[pi] at (8,0) {1}; 
	\node[pi] at (9,0) {7}; 
	\node[pi] at (10,0) {$\infty$}; 
	\end{scope}
\end{tikzpicture}
\end{center}

\caption{Cyclic valley-hopping on $\pi=(523)(8)(97641)$ with $S=\{3,7\}$
yields $\psi_{S}(\pi)=(532)(8)(96417)$}
\end{figure}
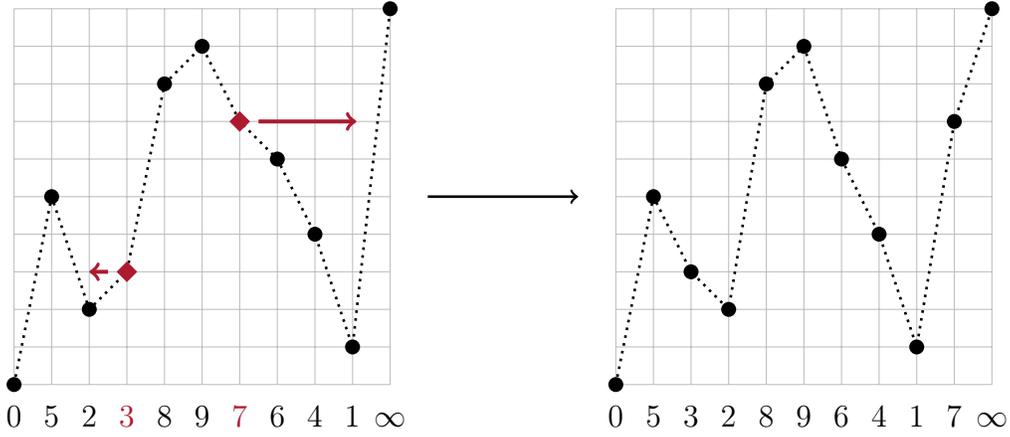

We omit the proof of the next proposition, which describes the cyclic
valleys, cyclic peaks, cyclic double ascents, cyclic double descents,
and fixed points of $\psi_{S}(\pi)$ in terms of those of $\pi$.
The takeaway is that cyclic valley-hopping does not affect cyclic
valleys, cyclic peaks, and fixed points, but toggles between cyclic
double ascents and cyclic double descents.
\begin{prop}
\label{p-statsets} For any $S\subseteq[n]$ and $\pi\in\mathfrak{S}_{n}$,
we have:
\begin{itemize}
\item [\normalfont{(a)}]$\Cval(\psi_{S}(\pi))=\Cval(\pi)$;
\item [\normalfont{(b)}]$\Cpk(\psi_{S}(\pi))=\Cpk(\pi)$;
\item [\normalfont{(c)}]$\Cdasc(\psi_{S}(\pi))=(\Cdasc(\pi)\backslash S)\cup(S\cap\Cddes(\pi))$;
\item [\normalfont{(d)}]$\Cddes(\psi_{S}(\pi))=(\Cddes(\pi)\backslash S)\cup(S\cap\Cdasc(\pi))$;
\item [\normalfont{(e)}]$\Fix(\psi_{S}(\pi))=\Fix(\pi)$.
\end{itemize}
\end{prop}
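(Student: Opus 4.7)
The plan is to reduce the statement to the known behavior of the linear modified Foata--Strehl action $\varphi_{x}$ via Foata's bijection $o$. I would first establish the precise correspondence between cyclic statistics of $\pi$ and linear statistics of $o(\pi)$ (under the convention that the $0$th letter of $o(\pi)$ is $0$ and the $(n+1)$th letter is $\infty$): a letter $y\in[n]$ is a cyclic valley, cyclic peak, or cyclic double descent of $\pi$ if and only if $y$ is a linear valley, peak, or double descent of $o(\pi)$, respectively; and $y$ is a linear double ascent of $o(\pi)$ if and only if $y$ is either a cyclic double ascent of $\pi$ or a fixed point of $\pi$. These equivalences follow from a short case analysis based on whether $y$ is the first, middle, or last letter of its cycle in the canonical cycle representation, using that each cycle begins with its maximum and that cycles are ordered by their maxima.

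Next I would record two standard properties of $\varphi_{x}$ on a one-line permutation $\sigma$: first, $\varphi_{x}$ toggles $x$ between double ascent and double descent when one of these applies and acts as the identity on $\sigma$ otherwise; second, $\varphi_{x}$ preserves the peak/valley/double-ascent/double-descent type of every letter $y\neq x$, which just requires checking the letters abutting the $x$-factorization boundaries. Given these two facts together with the correspondence from the first step, parts \textnormal{(a)}, \textnormal{(b)}, \textnormal{(c)}, and \textnormal{(d)} for a single non-fixed-point $x$ follow immediately, provided one can show that $\psi_{x}$ preserves the cycle partition of $\pi$ so that the correspondence transfers cleanly between $\pi$ and $\psi_{x}(\pi)$.

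The main obstacle is establishing this cycle-preservation, which is also the content of part \textnormal{(e)}. Here I would inspect the $x$-factorization of $o(\pi)$: writing the cycle of $\pi$ containing $x=c_{j}$ in canonical form as $(c_{1},c_{2},\ldots,c_{k})$ with $c_{1}$ the maximum, both $w_{2}$ and $w_{4}$ must be subwords of $c_{2}c_{3}\cdots c_{k}$, since $c_{1}>x$ bounds $w_{2}$ on the left and every letter of every later cycle is larger than $c_{1}>x$ and so bounds $w_{4}$ on the right. Hence the swap performed by $\varphi_{x}$ permutes letters only within this single cycle block of $o(\pi)$; all other cycle blocks, in particular the singleton ones, are untouched, and the maximum $c_{1}$ of the affected block remains its first letter. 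Thus $o^{-1}(\varphi_{x}(o(\pi)))$ has the same cycle partition as $\pi$, which simultaneously yields \textnormal{(e)} and justifies transporting the cyclic-vs-linear dictionary through $\psi_{x}$.

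Finally, to pass from a single $x$ to a subset $S\subseteq[n]$, I would use the commutativity of the $\psi_{x}$'s (inherited from that of the $\varphi_{x}$'s) and apply the single-letter statement iteratively. Cyclic valleys, cyclic peaks, and fixed points are preserved at each step, and each non-identity step at some $x\in S$ toggles only the status of $x$ itself between $\Cdasc$ and $\Cddes$, which yields the set-theoretic descriptions in \textnormal{(c)} and \textnormal{(d)}.
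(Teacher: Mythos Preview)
The paper explicitly omits the proof of this proposition, so there is no argument to compare yours against directly. Your approach is correct and is the natural one: translate cyclic statistics of $\pi$ into linear statistics of $o(\pi)$ under the $0/\infty$ boundary convention, invoke the standard behavior of $\varphi_x$ on one-line permutations, and transfer back. Your cycle-preservation step is essentially the same argument the paper gives in its proof of Proposition~\ref{p-conjinvariant}, so your proposal is very much in the spirit of the paper.

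One small slip in the cycle-preservation paragraph: it is not true that ``every letter of every later cycle is larger than $c_1$''; later cycles can certainly contain small letters. What you actually need, and what is true, is that the letter immediately following $c_k$ in $o(\pi)$ is the maximum of the next cycle (or $\infty$ if there is none), and this exceeds $c_1>x$; hence $w_4$ cannot extend past the current cycle block. With that correction your argument goes through.
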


We say that a subset $\Pi\subseteq\mathfrak{S}_{n}$ is \textit{invariant
under cyclic valley-hopping} if for every $S\subseteq[n]$ and permutation
$\pi\in\Pi$, we have $\psi_{S}(\pi)\in\Pi$ (equivalently, if $\Pi$
is a disjoint union of orbits of the cyclic valley-hopping action).
\begin{prop}
\label{p-conjinvariant} Any conjugacy class $\mathfrak{S}_{n}(\lambda)$
is invariant under cyclic valley-hopping.
\end{prop}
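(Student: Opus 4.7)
The plan is to reduce to the single-generator statement that for each $x\in[n]$, the involution $\psi_x$ preserves cycle type. Since the $\psi_x$'s commute and $\psi_S=\prod_{x\in S}\psi_x$, iterating this statement handles arbitrary $S$, and cycle type is exactly the invariant that determines a conjugacy class. So I would fix $\pi\in\mathfrak{S}_n(\lambda)$ and $x\in[n]$ once and for all. If $x$ is a fixed point of $\pi$, then $\psi_x(\pi)=\pi$ by definition and there is nothing to show.

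Otherwise, let $C$ be the cycle of $\pi$ containing $x$ and let $M$ be the largest element of $C$. The central observation is that, because canonical cycle representation writes each cycle with its maximum first and orders cycles by increasing maximum, the left-to-right maxima of $o(\pi)$ are exactly the cycle maxes; in particular, $M$ sits at the first position of the segment of $o(\pi)$ corresponding to $C$. If $x=M$, then both neighbors of $x$ in $o(\pi)$ are less than $x$ (the predecessor lies either in an earlier cycle, whose max is $<M=x$, or is the $0$-boundary, while the successor lies inside $C$ and is thus $<M$), so $x$ is a peak of $o(\pi)$, $\varphi_x$ fixes $o(\pi)$, and $\psi_x(\pi)=\pi$.

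If $x\neq M$, I would show that the $x$-factorization $w_1w_2xw_4w_5$ of $o(\pi)$ satisfies the stronger property that both $w_2$ and $w_4$ are contained in the segment of $o(\pi)$ belonging to $C$. Leftward: $M>x$ lies in $C$'s segment strictly to the left of $x$, so walking leftward through letters $<x$ must halt inside that segment, placing $w_2$ inside it. Rightward: we either halt at a letter of $C$ that is $\geq x$, or we exit $C$ and encounter the first letter of the next cycle, which is that cycle's maximum and so exceeds $M>x$, forcing a halt at the very start of the next cycle. Hence $\varphi_x$ permutes letters only within $C$'s segment and in particular leaves $M$ fixed at its first position; the left-to-right maxima of $\varphi_x(o(\pi))$ therefore coincide with those of $o(\pi)$. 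Applying $o^{-1}$, the cycles of $\psi_x(\pi)$ outside of $C$ agree with those of $\pi$, while $C$ is replaced by a (possibly different) cycle on the same element set. In particular, cycle type is preserved. The one delicate point is the rightward containment: it is exactly the canonical ordering of cycles by increasing max that traps $w_4$ inside $C$'s segment, and this is the step I would write out most carefully.
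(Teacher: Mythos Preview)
Your proposal is correct and follows essentially the same approach as the paper: both reduce to a single generator $\psi_x$, both argue that in the $x$-factorization $w_1w_2xw_4w_5$ of $o(\pi)$ the subwords $w_2$ and $w_4$ lie entirely inside the segment corresponding to the cycle $C$ containing $x$ (bounded on the left by the cycle maximum $M$ and on the right by the next cycle's maximum or $\infty$), and both conclude that the left-to-right maxima of $o(\pi)$ and their positions are unchanged by $\varphi_x$, so that $o^{-1}$ recovers the same cycle type. The only cosmetic difference is that the paper dismisses the cases where $x$ is a cyclic peak or cyclic valley upfront (since then $\psi_x(\pi)=\pi$ by definition), whereas you isolate only the $x=M$ case and let the segment-containment argument absorb the remaining peaks and valleys; both are fine.
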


\begin{proof}
It suffices to show that, for any permutation $\pi\in\mathfrak{S}_{n}$
and $x\in[n]$, the permutation $\psi_{x}(\pi)$ has the same cycle
type as $\pi$. Because $\psi_{x}(\pi)=\pi$ whenever $x$ is a fixed
point, cyclic valley, or cyclic peak, we only need to consider the
cases when $x$ is a cyclic double ascent or cyclic double descent. 

Fix a cyclic double ascent or cyclic double descent $x$ of $\pi$.
Let us write $\pi$ as a product of cycles $\pi=C_{1}C_{2}\cdots C_{i}\cdots C_{k}$,
and let $C_{i}$ be the cycle containing $x$. First suppose that
$i<k$, i.e., $C_{i}$ is not the last cycle of $\pi$. Let $c$ denote
the first letter of $C_{i}$, and let $d$ denote the first letter
of the next cycle $C_{i+1}$. Then both $c$ and $d$ are larger than
every other element in $C_{i}$. Consider the $x$-factorization $w_{1}w_{2}xw_{4}w_{5}$
of $o(\pi)$. We observe that the letter $c$ is in $w_{1}$ and the
letter $d$ is in $w_{5}$, that all of the letters in $w_{2}xw_{4}$
are from the cycle $C_{i}$, and that each of the left-to-right maxima
of $o(\pi)$ is in $w_{1}$ or $w_{5}$. Thus, the letter $x$ is
still between the letters $c$ and $d$ in $\varphi_{x}(o(\pi))=w_{1}w_{4}xw_{2}w_{5}$,
and the left-to-right maxima of $o(\pi)$ and their positions are
unchanged after applying $\varphi_{x}$. It follows that the number
of cycles and the number of elements in each cycle of $\pi$ are unchanged
after applying $\psi_{x}$, so $\psi_{x}(\pi)$ has the same cycle
type as $\pi$. If $C_{i}$ is the last cycle of $\pi$, then a similar
argument works with $d=\infty$.
\end{proof}

\section{Results}

For a set of permutations $\Pi\subseteq\mathfrak{S}_{n}$, let 
\[
E(\Pi;t)\coloneqq\sum_{\pi\in\Pi}t^{\exc(\pi)}.
\]
Also let $\mathfrak{S}_{n,k}$ be the set of permutations in $\mathfrak{S}_{n}$
with exactly $k$ fixed points, and given $\sigma\in\mathfrak{S}_{n}$,
we let $\Orb(\sigma)=\{\,\psi_{S}(\sigma)\mid S\subseteq[n]\,\}$
denote the orbit of $\sigma$ under cyclic valley-hopping. We begin
by proving a preliminary lemma on the excedance number distribution
over a single orbit.
\begin{lem}
\label{l-excorb} Let $\sigma\in\mathfrak{S}_{n,k}$. Then
\[
\sum_{\pi\in\Orb(\sigma)}t^{\exc(\pi)}=\sum_{\pi\in\Orb(\sigma)}\frac{(s+t)^{\exc(\pi)-\cval(\pi)}(1+st)^{n-k-\cval(\pi)-\exc(\pi)}t^{\cval(\pi)}}{(1+s)^{n-k-2\cval(\pi)}}.
\]
\end{lem}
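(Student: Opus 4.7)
The plan is to exploit the structure of the cyclic valley-hopping orbit $\Orb(\sigma)$ provided by Proposition \ref{p-statsets}: the statistics $\cval$, $\cpk$, and $\fix$ are constant on the orbit, while $\cdasc$ and $\cddes$ can be freely toggled. Set $a \coloneqq \cval(\sigma)$, so $\cpk(\pi) = a$ and $\fix(\pi) = k$ for every $\pi \in \Orb(\sigma)$. By equation \eqref{e-ndecomp}, we have $\cdasc(\pi) + \cddes(\pi) = n - k - 2a$ for every such $\pi$; denote this common value by $m$, and write $D \coloneqq \Cdasc(\sigma) \cup \Cddes(\sigma)$, a set of size $m$ that is constant on the orbit.

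Next I would verify that the orbit has exactly $2^m$ elements, parametrized by the choice of which subset of $D$ consists of cyclic double ascents. Indeed, for $x \notin D$, the map $\psi_x$ is the identity (since $x$ is a cyclic valley, cyclic peak, or fixed point), while for $x \in D$, the map $\psi_x$ toggles $x$ between $\Cdasc$ and $\Cddes$ and commutes with the other involutions. Together with the fact that distinct $\Cdasc$ sets yield distinct permutations, this shows that for each $0 \leq j \leq m$, there are exactly $\binom{m}{j}$ permutations $\pi \in \Orb(\sigma)$ with $\cdasc(\pi) = j$, and each such $\pi$ satisfies $\exc(\pi) = \cval(\pi) + \cdasc(\pi) = a + j$ by \eqref{e-excdecomp}.

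With these observations, both sides reduce to explicit binomial sums. The left-hand side becomes
\[
\sum_{\pi \in \Orb(\sigma)} t^{\exc(\pi)} = \sum_{j=0}^{m} \binom{m}{j} t^{a+j} = t^{a}(1+t)^{m}.
\]
For the right-hand side, substituting $\cval(\pi) = a$, $\exc(\pi) - \cval(\pi) = j$, $n - k - \cval(\pi) - \exc(\pi) = m - j$, and $n - k - 2\cval(\pi) = m$ yields
\[
\sum_{\pi \in \Orb(\sigma)} \frac{(s+t)^{\exc(\pi)-\cval(\pi)}(1+st)^{n-k-\cval(\pi)-\exc(\pi)} t^{\cval(\pi)}}{(1+s)^{n-k-2\cval(\pi)}} = \frac{t^{a}}{(1+s)^{m}} \sum_{j=0}^{m} \binom{m}{j} (s+t)^{j}(1+st)^{m-j}.
\]
By the binomial theorem and the factorization $(s+t) + (1+st) = (1+s)(1+t)$, this simplifies to $t^{a}(1+t)^{m}$, matching the left-hand side.

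The main (and only real) obstacle is setting up the parametrization of $\Orb(\sigma)$ correctly and confirming that all $2^m$ toggles of elements in $D$ yield distinct permutations; once that is in place, the identity reduces to a one-line binomial computation enabled by the key algebraic identity $(s+t) + (1+st) = (1+s)(1+t)$.
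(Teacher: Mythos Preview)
Your proof is correct and takes a genuinely different route from the paper's. The paper establishes the equivalent identity
\[
\Big(\sum_{\pi\in\Orb(\sigma)}t^{\exc(\pi)}\Big)(1+s)^{\cdasc(\sigma)+\cddes(\sigma)}=\sum_{\pi\in\Orb(\sigma)}(s+t)^{\cdasc(\pi)}(1+st)^{\cddes(\pi)}t^{\cval(\pi)}
\]
by a double-counting argument: both sides are interpreted as weighted counts of pairs $(\pi,S)$ where $\pi\in\Orb(\sigma)$ and $S$ is a marked subset of its cyclic double ascents and double descents, with the right-hand side arising after applying $\psi_S$ before reading off the weights. No explicit enumeration of the orbit is needed. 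By contrast, you parametrize $\Orb(\sigma)$ explicitly as a set of size $2^{m}$ indexed by subsets of $D=\Cdasc(\sigma)\cup\Cddes(\sigma)$, then reduce both sides to the closed form $t^{a}(1+t)^{m}$ via the binomial theorem and the factorization $(s+t)+(1+st)=(1+s)(1+t)$. Your approach is more direct and yields the common value of both sides immediately; the paper's approach is more conceptual and avoids having to verify injectivity of $S\mapsto\psi_S(\sigma)$, instead deferring the orbit-size computation to the proof of Theorem~\ref{t-cvalexcgpos}. One small remark: the injectivity you need follows cleanly from Proposition~\ref{p-statsets}(c), since for $S\subseteq D$ one has $\Cdasc(\psi_S(\sigma))=\Cdasc(\sigma)\triangle S$, so distinct $S$ produce distinct $\Cdasc$ sets and hence distinct permutations.
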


\begin{proof}
Given a fixed permutation $\sigma\in\mathfrak{S}_{n,k}$, first we
wish to prove the identity 
\begin{equation}
\Big(\sum_{\pi\in\Orb(\sigma)}t^{\exc(\pi)}\Big)(1+s)^{\cdasc(\sigma)+\cddes(\sigma)}=\sum_{\pi\in\Orb(\sigma)}(s+t)^{\cdasc(\pi)}(1+st)^{\cddes(\pi)}t^{\cval(\pi)},\label{e-mfs}
\end{equation}
which we do combinatorially by showing that the two sides of the equation
encode the same objects.

We begin with the left-hand side. Each summand in the factor $\sum_{\pi\in\Orb(\sigma)}t^{\exc(\pi)}$
corresponds to a permutation in the orbit of $\sigma$ weighted by
its excedance number. Each summand in the factor $(1+s)^{\cdasc(\sigma)+\cddes(\sigma)}$
corresponds to marking a subset of cyclic double ascents and cyclic
double descents of $\sigma$. Thus, the left-hand side counts permutations
in $\Orb(\sigma)$ where $t$ is weighting the excedance number and
$s$ is weighting the number of marked letters.

Now, let us examine the right-hand side of Equation (\ref{e-mfs}).
Each term on the right-hand side of Equation (\ref{e-mfs}) corresponds
to taking a permutation $\pi\in\Orb(\sigma)$, choosing a subset $S$
of cyclic double ascents and cyclic double descents of $\pi$, applying
$\psi_{S}$ to $\pi$, marking the letters of $S$ in $\psi_{S}(\pi)$
(which are all cyclic double ascents or cyclic double descents of
$\psi_{S}(\pi)$), and weighting the marked letters by $s$ and the
excedances of $\psi_{S}(\pi)$ by $t$. The $(s+t)^{\cdasc(\pi)}$
factor corresponds to selecting the cyclic double ascents,\footnote{More precisely, we are partitioning $\Cdasc(\pi)$ into two sets $S\cap\Cdasc(\pi)$
and $\Cdasc(\pi)\backslash S$. By Proposition \ref{p-statsets} (d),
the letters in $S\cap\Cdasc(\pi)$ are cyclic double descents of $\psi_{S}(\pi)$
and thus non-excedances of $\psi_{S}(\pi)$, so they are not given
a weight of $t$ but are given a weight of $s$ because they belong
to $S$. On the other hand, by Proposition \ref{p-statsets} (c),
the letters in $\Cdasc(\pi)\backslash S$ are cyclic double descents
of $\psi_{S}(\pi)$ and thus excedances of $\psi_{S}(\pi)$, so they
are given a weight of $t$ but not a weight of $s$ because they do
not belong to $S$.} and the $(1+st)^{\cddes(\pi)}$ factor corresponds to selecting the
cyclic double descents.\footnote{This is by similar reasoning as in the previous footnote.} 

At this point, we have accounted for all excedances of $\psi_{S}(\pi)$
which are cyclic double ascents of $\psi_{S}(\pi)$. By Equation (\ref{e-excdecompset}),
the only remaining excedances of $\psi_{S}(\pi)$ are the cyclic valleys
of $\psi_{S}(\pi)$, which are precisely the cyclic valleys of $\pi$
by Proposition \ref{p-statsets} (a); this contributes the factor
of $t^{\cval(\pi)}$. In summary, both sides of Equation (\ref{e-mfs})
count permutations in the orbit of $\sigma$ with a marked subset
$S$ of letters by the same weights, but on the right-hand side, we
are applying the involution $\psi_{S}$ to each $\pi\in\Orb(\sigma)$
before doing the counting.

Next, observe the following:
\begin{itemize}
\item By Equation (\ref{e-excdecomp}), we have 
\[
\cdasc(\pi)=\exc(\pi)-\cval(\pi).
\]
\item By Equations (\ref{e-ndecomp}) and (\ref{e-cpkcval}), we have 
\begin{align*}
\cddes(\pi) & =n-(\cval(\pi)+\cpk(\pi)+\cdasc(\pi)+k)\\
 & =n-k-\cval(\pi)-\exc(\pi).
\end{align*}
\item By the above two equations, we have 
\[
\cdasc(\sigma)+\cddes(\sigma)=n-k-2\cval(\sigma).
\]
\end{itemize}
Therefore, from (\ref{e-mfs}) we have the equation
\[
\Big(\sum_{\pi\in\Orb(\sigma)}t^{\exc(\pi)}\Big)(1+s)^{n-k-2\cval(\sigma)}=\sum_{\pi\in\Orb(\sigma)}(s+t)^{\exc(\pi)-\cval(\pi)}(1+st)^{n-k-\cval(\pi)-\exc(\pi)}t^{\cval(\pi)},
\]
and dividing both sides by $(1+s)^{n-k-2\cval(\sigma)}=(1+s)^{n-k-2\cval(\pi)}$
gives us the desired formula.
\end{proof}

\subsection{Gamma-positivity results}

Before proving our main result (Theorem \ref{t-mainthm}), we use
Lemma \ref{l-excorb} to prove a $\gamma$-positivity result for excedance
number distributions over subsets invariant under cyclic valley-hopping
and containing permutations with the same number of fixed points.
\begin{thm}
\label{t-cvalexcgpos} Let $\Pi\subseteq\mathfrak{S}_{n,k}$ be invariant
under cyclic valley-hopping. Then 
\[
E(\Pi;t)=\sum_{i=0}^{\left\lfloor (n-k)/2\right\rfloor }\gamma_{i}t^{i}(1+t)^{n-k-2i}
\]
where 
\begin{align*}
\gamma_{i} & =\left|\left\{ \,\pi\in\Pi:\cval(\pi)=i\text{ and }\cdasc(\pi)=0\,\right\} \right|\\
 & =\frac{1}{2^{n-k-2i}}\left|\left\{ \,\pi\in\Pi:\cval(\pi)=i\,\right\} \right|.
\end{align*}
\end{thm}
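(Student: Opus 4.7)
The plan is to apply Lemma \ref{l-excorb} orbit-by-orbit with the specialization $s=1$, then sum over all orbits contained in $\Pi$. First I would observe that, by Proposition \ref{p-statsets} (a) and (e), both $\cval$ and $\fix$ are constant on each cyclic valley-hopping orbit. Together with identities (\ref{e-ndecomp}) and (\ref{e-cpkcval}), this means that every $\pi \in \Orb(\sigma)$ with $\sigma \in \mathfrak{S}_{n,k}$ satisfies $\cdasc(\pi) + \cddes(\pi) = n - k - 2\cval(\sigma)$, a quantity I will abbreviate $N$.

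Next I would set $s=1$ in Lemma \ref{l-excorb}. The factors $(s+t)^{\exc-\cval}$ and $(1+st)^{n-k-\cval-\exc}$ combine into $(1+t)^{n-k-2\cval}$, and since $\cval$ is constant on $\Orb(\sigma)$ this simplifies to
\[
\sum_{\pi\in\Orb(\sigma)} t^{\exc(\pi)} \;=\; |\Orb(\sigma)| \cdot \frac{(1+t)^{N}\, t^{\cval(\sigma)}}{2^{N}}.
\]
Then I would compute the size of $\Orb(\sigma)$. By Proposition \ref{p-statsets} (c)--(d), the involutions $\psi_{\{x\}}$ for $x$ a cyclic double ascent or cyclic double descent toggle the cdasc/cddes status of $x$ while fixing everything else; distinct subsets $S \subseteq \Cdasc(\sigma)\cup\Cddes(\sigma)$ therefore yield distinct permutations, and positions outside this set act trivially. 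Hence $|\Orb(\sigma)| = 2^{N}$, and the right-hand side above collapses to $(1+t)^{N}\, t^{\cval(\sigma)}$.

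Since $\Pi$ is invariant under cyclic valley-hopping, it is a disjoint union of orbits, so summing over orbits yields
\[
E(\Pi;t) \;=\; \sum_{\Orb \subseteq \Pi} (1+t)^{n-k-2\,\cval(\Orb)}\, t^{\cval(\Orb)} \;=\; \sum_{i=0}^{\lfloor (n-k)/2\rfloor} \gamma_i\, t^{i}(1+t)^{n-k-2i},
\]
where $\gamma_i$ counts the orbits in $\Pi$ of $\cval$-value $i$. To identify $\gamma_i$ with the two given formulas, I would argue that every orbit with $\cval = i$ contains a unique representative with $\cdasc = 0$: starting from any $\pi$ in the orbit, the choice $S = \Cdasc(\pi)$ flips all cyclic double ascents into cyclic double descents by Proposition \ref{p-statsets} (c), and this is the only $S$ within the orbit's $\mathbb{Z}_2^N$ action producing a permutation with no cyclic double ascent. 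For the second expression, I would use that each such orbit has exactly $2^{n-k-2i}$ elements (computed above), all with $\cval = i$, so dividing the total number of permutations in $\Pi$ with $\cval = i$ by the orbit size recovers the orbit count. The only step requiring real care is the freeness/size computation for the orbits, and this is exactly where Proposition \ref{p-statsets} (c)--(d) does the work.
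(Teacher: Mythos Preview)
Your proposal is correct and follows essentially the same approach as the paper: specialize Lemma~\ref{l-excorb} at $s=1$, use constancy of $\cval$ on orbits to reduce each orbit's contribution to $t^{\cval(\sigma)}(1+t)^{n-k-2\cval(\sigma)}$, and then sum over the orbits in $\Pi$. Your justification of $|\Orb(\sigma)|=2^{n-k-2\cval(\sigma)}$ via Proposition~\ref{p-statsets} (c)--(d) is in fact a bit more explicit than the paper's, which simply asserts it after noting $n-k-2\cval(\sigma)=\cdasc(\sigma)+\cddes(\sigma)$.
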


\begin{proof}
Taking Lemma \ref{l-excorb} and setting $s=1$ yields 
\[
\sum_{\pi\in\Orb(\sigma)}t^{\exc(\pi)}=\sum_{\pi\in\Orb(\sigma)}\frac{t^{\cval(\pi)}(1+t)^{n-k-2\cval(\pi)}}{2^{n-k-2\cval(\pi)}},
\]
and noting that $\cval(\pi)=\cval(\sigma)$ for all $\pi\in\Orb(\sigma)$\textemdash a
consequence of Proposition \ref{p-statsets} (a)\textemdash yields
\[
\sum_{\pi\in\Orb(\sigma)}t^{\exc(\pi)}=\Big(\sum_{\pi\in\Orb(\sigma)}\frac{1}{2^{n-k-2\cval(\sigma)}}\Big)t^{\cval(\sigma)}(1+t)^{n-k-2\cval(\sigma)}.
\]
Since $n-k-2\cval(\sigma)=\cdasc(\sigma)+\cddes(\sigma)$, it follows
that $\left|\Orb(\sigma)\right|=2^{n-k-2\cval(\sigma)}.$ Thus 
\[
\sum_{\pi\in\Orb(\sigma)}t^{\exc(\pi)}=t^{\cval(\sigma)}(1+t)^{n-k-2\cval(\sigma)}.
\]
Summing this equation over all orbits contributing to $\Pi$ yields
\[
E(\Pi;t)=\sum_{\pi\in\Pi}t^{\exc(\pi)}=\sum_{i=0}^{\left\lfloor (n-k)/2\right\rfloor }\gamma_{i}t^{i}(1+t)^{n-k-2i}
\]
where $\gamma_{i}$ is the number of orbits contributing to $\Pi$
containing permutations with exactly $i$ cyclic valleys. 

Since $\left|\Orb(\sigma)\right|=2^{n-k-2\cval(\sigma)}$, we have
\[
\gamma_{i}=\frac{1}{2^{n-k-2i}}\left|\left\{ \,\pi\in\Pi:\cval(\pi)=i\,\right\} \right|.
\]
Moreover, in each cyclic valley-hopping orbit there is a unique permutation
with no cyclic double ascents\textemdash this is $\psi_{S}(\sigma)$
for $S=\Cdasc(\sigma)$\textemdash so alternatively we have
\[
\gamma_{i}=\left|\left\{ \,\pi\in\Pi:\cval(\pi)=i\text{ and }\cdasc(\pi)=0\,\right\} \right|.\qedhere
\]
\end{proof}
We now give several interesting consequences of Theorem \ref{t-cvalexcgpos}.
\begin{cor}
Let $\lambda$ be a partition of $n$ with $k$ parts of size 1. Then
\[
E_{\lambda}(t)=\sum_{i=0}^{\left\lfloor (n-k)/2\right\rfloor }\gamma_{i}t^{i}(1+t)^{n-k-2i}
\]
where $\gamma_{i}=2^{-n+k+2i}\left|\left\{ \,\pi\in\mathfrak{S}_{n}(\lambda):\cval(\pi)=i\,\right\} \right|$.
\end{cor}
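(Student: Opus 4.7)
The plan is essentially to invoke Theorem \ref{t-cvalexcgpos} with $\Pi = \mathfrak{S}_{n}(\lambda)$, so the work is just verifying the hypotheses and unpacking the conclusion. First I would note that by the very definition of cycle type, every permutation $\pi \in \mathfrak{S}_{n}(\lambda)$ has exactly $m_{1} = k$ fixed points, so $\mathfrak{S}_{n}(\lambda) \subseteq \mathfrak{S}_{n,k}$. Second, Proposition \ref{p-conjinvariant} tells us that $\mathfrak{S}_{n}(\lambda)$ is invariant under cyclic valley-hopping, which is the other hypothesis of Theorem \ref{t-cvalexcgpos}.

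With both hypotheses in hand, Theorem \ref{t-cvalexcgpos} applied to $\Pi = \mathfrak{S}_{n}(\lambda)$ yields
\[
E(\mathfrak{S}_{n}(\lambda); t) = \sum_{i=0}^{\lfloor (n-k)/2 \rfloor} \gamma_{i} t^{i}(1+t)^{n-k-2i},
\]
where $\gamma_{i} = 2^{-(n-k-2i)} \left|\{\,\pi \in \mathfrak{S}_{n}(\lambda) : \cval(\pi) = i\,\}\right|$. Since $E(\mathfrak{S}_{n}(\lambda); t)$ is precisely $E_{\lambda}(t)$ by definition, this is the claimed formula with the advertised $\gamma$-coefficients.

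There is no real obstacle here; the corollary is a direct specialization of Theorem \ref{t-cvalexcgpos}, where the bulk of the combinatorial work (the orbit analysis via cyclic valley-hopping) has already been carried out. The only thing worth pointing out is that $\mathfrak{S}_{n}(\lambda)$ is genuinely a union of full orbits in $\mathfrak{S}_{n,k}$ rather than merely a subset with the right fixed-point count, and this is exactly what Proposition \ref{p-conjinvariant} supplies.
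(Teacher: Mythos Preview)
Your proof is correct and follows exactly the same approach as the paper: invoke Proposition \ref{p-conjinvariant} to get invariance under cyclic valley-hopping, then apply Theorem \ref{t-cvalexcgpos} with $\Pi=\mathfrak{S}_{n}(\lambda)$. If anything, you are slightly more careful in explicitly noting that $\mathfrak{S}_{n}(\lambda)\subseteq\mathfrak{S}_{n,k}$ because the parts of size~1 are precisely the fixed points.
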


\begin{proof}
We know from Proposition \ref{p-conjinvariant} that $\mathfrak{S}_{n}(\lambda)$
is invariant under cyclic valley-hopping for any partition $\lambda$
of $n$. Thus the result follows from Theorem \ref{t-cvalexcgpos}
by taking $\Pi=\mathfrak{S}_{n}(\lambda)$.
\end{proof}
As mentioned in the introduction, the $\gamma$-positivity of $E_{\lambda}(t)$
follows from Brenti's formula (\ref{e-brenti}), but our approach
yields a combinatorial interpretation for the $\gamma$-coefficients.

The next corollary explains our observation in Remark \ref{r-unisym}
that the coefficient of each $s^{i}$ in $E_{\lambda}^{(\cval,\exc)}(s,t)$
appears to be a unimodal and symmetric polynomial in $t$.
\begin{cor}
\label{c-cvalexccoeff} Let $\lambda$ be a partition of $n$. Then,
for any integer $i\geq0$, the coefficient of $s^{i}$ in $E_{\lambda}^{(\cval,\exc)}(s,t)$
is a $\gamma$-positive polynomial in $t$.
\end{cor}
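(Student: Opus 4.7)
The plan is to extract the coefficient of $s^i$ directly as an excedance-number generating polynomial over a suitable subset of $\mathfrak{S}_n(\lambda)$ and then apply Theorem~\ref{t-cvalexcgpos}. Write
\[
[s^{i}]\,E_{\lambda}^{(\cval,\exc)}(s,t)=\sum_{\pi\in\Pi_{\lambda,i}}t^{\exc(\pi)}=E(\Pi_{\lambda,i};t),
\]
where $\Pi_{\lambda,i}\coloneqq\{\,\pi\in\mathfrak{S}_{n}(\lambda):\cval(\pi)=i\,\}$.

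The task then reduces to verifying the hypotheses of Theorem~\ref{t-cvalexcgpos} for $\Pi_{\lambda,i}$. First, every permutation in $\mathfrak{S}_n(\lambda)$ has exactly $m_1$ fixed points, where $m_1$ is the number of parts of size $1$ in $\lambda$, so $\Pi_{\lambda,i}\subseteq\mathfrak{S}_{n,m_1}$. Second, I need to check that $\Pi_{\lambda,i}$ is invariant under cyclic valley-hopping. For any $\pi\in\Pi_{\lambda,i}$ and $S\subseteq[n]$, Proposition~\ref{p-conjinvariant} guarantees $\psi_{S}(\pi)\in\mathfrak{S}_n(\lambda)$, and Proposition~\ref{p-statsets}(a) gives $\cval(\psi_S(\pi))=\cval(\pi)=i$, so $\psi_S(\pi)\in\Pi_{\lambda,i}$.

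With both hypotheses confirmed, Theorem~\ref{t-cvalexcgpos} applies directly with $\Pi=\Pi_{\lambda,i}$ and $k=m_1$, yielding
\[
[s^{i}]\,E_{\lambda}^{(\cval,\exc)}(s,t)=E(\Pi_{\lambda,i};t)=\sum_{j=0}^{\lfloor(n-m_1)/2\rfloor}\gamma_{i,j}\,t^{j}(1+t)^{n-m_1-2j}
\]
with non-negative integer coefficients $\gamma_{i,j}$, establishing $\gamma$-positivity. In fact, the formula from Theorem~\ref{t-cvalexcgpos} shows that $\gamma_{i,j}=0$ unless $j=i$ (since every $\pi\in\Pi_{\lambda,i}$ has $\cval(\pi)=i$), so the expansion collapses to a single term $\gamma_{i,i}\,t^{i}(1+t)^{n-m_1-2i}$, recovering and refining the observation of Remark~\ref{r-unisym}.

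There is no real obstacle here: once Theorem~\ref{t-cvalexcgpos} is in hand, the only nontrivial step is recognizing that slicing by a fixed value of $\cval$ produces a cyclic valley-hopping-invariant subset, which is immediate from Proposition~\ref{p-statsets}(a). The proof is essentially a one-line application of the preceding theorem.
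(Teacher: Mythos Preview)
Your proof is correct and follows essentially the same approach as the paper: both identify the coefficient of $s^i$ as $E(\Pi;t)$ for $\Pi=\{\pi\in\mathfrak{S}_n(\lambda):\cval(\pi)=i\}$, verify invariance under cyclic valley-hopping via Proposition~\ref{p-conjinvariant} and Proposition~\ref{p-statsets}(a), and then apply Theorem~\ref{t-cvalexcgpos}. Your final remark that the $\gamma$-expansion collapses to the single term $\gamma_{i,i}\,t^{i}(1+t)^{n-m_1-2i}$ is a correct and worthwhile sharpening that the paper does not make explicit.
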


\begin{proof}
Let $\mathfrak{S}_{n,i}(\lambda)$ denote the set of permutations
in $\mathfrak{S}_{n}(\lambda)$ with exactly $i$ cyclic valleys.
We know that the number of cyclic valleys is constant over any cyclic
valley-hopping orbit. Thus, by the same reasoning as in the proof
of Lemma \ref{p-conjinvariant}, the set $\mathfrak{S}_{n,i}(\lambda)$
is invariant under cyclic valley-hopping, and so $E(\mathfrak{S}_{n,i}(\lambda);t)$
is $\gamma$-positive by Theorem \ref{t-cvalexcgpos}. Since 
\[
E_{\lambda}^{\cval,\exc}(s,t)=\sum_{i=0}^{\left\lfloor (n-k)/2\right\rfloor }E(\mathfrak{S}_{n,i}(\lambda);t)s^{i}
\]
(where $k$ is the number of parts of size 1 in $\lambda$), the result
follows.
\end{proof}
Let us now define $\mathfrak{S}_{n,k,i}$ to be the set of permutations
of length $n$ with exactly $k$ fixed points and $i$ cyclic valleys.
\begin{cor}
\label{c-gammakfixpts} For any $0\leq k\leq n$, we have
\[
E(\mathfrak{S}_{n,k};t)=\sum_{i=0}^{\left\lfloor (n-k)/2\right\rfloor }\frac{\left|\mathfrak{S}_{n,k,i}\right|}{2^{n-k-2i}}t^{i}(1+t)^{n-k-2i}.
\]
\end{cor}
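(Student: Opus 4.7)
The plan is to apply Theorem \ref{t-cvalexcgpos} directly with $\Pi = \mathfrak{S}_{n,k}$. The only thing that needs verification is that $\mathfrak{S}_{n,k}$ is invariant under cyclic valley-hopping, which is immediate from Proposition \ref{p-statsets}(e): since $\Fix(\psi_S(\pi)) = \Fix(\pi)$ for every $S \subseteq [n]$, applying any $\psi_S$ to a permutation with exactly $k$ fixed points yields another permutation with exactly $k$ fixed points. Hence $\mathfrak{S}_{n,k}$ is a union of cyclic valley-hopping orbits.

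With invariance in hand, Theorem \ref{t-cvalexcgpos} applied to $\Pi = \mathfrak{S}_{n,k}$ gives
\[
E(\mathfrak{S}_{n,k};t) = \sum_{i=0}^{\lfloor (n-k)/2 \rfloor} \gamma_i \, t^i (1+t)^{n-k-2i}
\]
with
\[
\gamma_i = \frac{1}{2^{n-k-2i}} \left| \{\, \pi \in \mathfrak{S}_{n,k} : \cval(\pi) = i \,\} \right|.
\]
By the definition of $\mathfrak{S}_{n,k,i}$, the set appearing here is exactly $\mathfrak{S}_{n,k,i}$, so $\gamma_i = |\mathfrak{S}_{n,k,i}| / 2^{n-k-2i}$, which is the stated formula.

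There is essentially no obstacle: the corollary is a direct specialization of Theorem \ref{t-cvalexcgpos}, and the only nontrivial ingredient is the invariance of $\mathfrak{S}_{n,k}$ under cyclic valley-hopping, which is a one-line consequence of part (e) of Proposition \ref{p-statsets}. The proof should therefore be just a few lines.
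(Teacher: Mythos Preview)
Your proof is correct and follows essentially the same approach as the paper: verify that $\mathfrak{S}_{n,k}$ is invariant under cyclic valley-hopping, then invoke Theorem~\ref{t-cvalexcgpos}. The only cosmetic difference is that the paper establishes invariance by writing $\mathfrak{S}_{n,k}$ as a union of conjugacy classes and citing Proposition~\ref{p-conjinvariant}, whereas you appeal directly to Proposition~\ref{p-statsets}(e); your route is arguably more direct.
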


\begin{proof}
Since $\mathfrak{S}_{n,k}$ is the union of all conjugacy classes
$\mathfrak{S}_{n}(\lambda)$ containing permutations with exactly
$k$ fixed points, each of which is invariant under cyclic valley-hopping,
it follows that $\mathfrak{S}_{n,k}$ is also invariant under cyclic
valley-hopping. Thus the result follows from Theorem \ref{t-cvalexcgpos}.
\end{proof}
The case $k=0$ of Corollary \ref{c-gammakfixpts} agrees with the
known result for derangements \cite{Shin2012,Sun2014}. The numbers
$\left|\mathfrak{S}_{n,k,i}\right|$ can be obtained via the generating
function 
\[
1+\sum_{n=1}^{\infty}\left|\mathfrak{S}_{n,k,i}\right|u^{k}t^{i}\frac{x^{n}}{n!}=\frac{\sqrt{1-t}e^{(u-1)x}}{\sqrt{1-t}\cosh(x\sqrt{1-t})-\sinh(x\sqrt{1-t})}
\]
(see \cite[Section 4.1]{Tirrell2018}).
\begin{cor}
For any $0\leq k\leq n$ and $0\leq i\leq\left\lfloor (n-k)/2\right\rfloor $,
we have
\[
E(\mathfrak{S}_{n,k,i};t)=\frac{\left|\mathfrak{S}_{n,k,i}\right|}{2^{n-k-2i}}t^{i}(1+t)^{n-k-2i}.
\]
\end{cor}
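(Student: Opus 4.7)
The plan is to apply Theorem \ref{t-cvalexcgpos} directly to the set $\Pi = \mathfrak{S}_{n,k,i}$. The only preliminary task is to verify that $\mathfrak{S}_{n,k,i}$ satisfies the hypotheses of that theorem, namely that it is contained in $\mathfrak{S}_{n,k}$ and invariant under cyclic valley-hopping.

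Containment in $\mathfrak{S}_{n,k}$ is immediate from the definition. Invariance follows from Proposition \ref{p-statsets}: part (e) tells us that $\Fix(\psi_S(\pi)) = \Fix(\pi)$, so applying $\psi_S$ preserves the number of fixed points, and part (a) tells us that $\Cval(\psi_S(\pi)) = \Cval(\pi)$, so applying $\psi_S$ preserves the number of cyclic valleys as well. Consequently $\psi_S(\pi) \in \mathfrak{S}_{n,k,i}$ whenever $\pi \in \mathfrak{S}_{n,k,i}$.

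Now I invoke Theorem \ref{t-cvalexcgpos} with $\Pi = \mathfrak{S}_{n,k,i}$ to obtain
\[
E(\mathfrak{S}_{n,k,i};t) = \sum_{j=0}^{\lfloor (n-k)/2 \rfloor} \gamma_j\, t^j (1+t)^{n-k-2j},
\]
where $\gamma_j = 2^{-(n-k-2j)} |\{\pi \in \mathfrak{S}_{n,k,i} : \cval(\pi) = j\}|$. The key observation is that every permutation in $\mathfrak{S}_{n,k,i}$ has exactly $i$ cyclic valleys, so the set inside the absolute value is empty when $j \neq i$ and equals $\mathfrak{S}_{n,k,i}$ when $j = i$. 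Hence $\gamma_j = 0$ for all $j \neq i$, while $\gamma_i = |\mathfrak{S}_{n,k,i}|/2^{n-k-2i}$, and the formula collapses to the single term claimed in the statement.

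There is no real obstacle here; the corollary is essentially an immediate specialization of Theorem \ref{t-cvalexcgpos}, and the only thing that needs to be said explicitly is why $\mathfrak{S}_{n,k,i}$ is invariant under cyclic valley-hopping (which is just a combination of parts (a) and (e) of Proposition \ref{p-statsets}).
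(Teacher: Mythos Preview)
Your proof is correct and follows the same overall strategy as the paper: verify that $\mathfrak{S}_{n,k,i}$ is invariant under cyclic valley-hopping and then apply Theorem~\ref{t-cvalexcgpos}. The only difference is in how invariance is checked: the paper writes $\mathfrak{S}_{n,k,i}$ as a union of the sets $\mathfrak{S}_{n,i}(\lambda)$ (over partitions $\lambda$ with $k$ parts of size~1) and appeals to the invariance of each piece established earlier via Proposition~\ref{p-conjinvariant}, whereas you go directly through Proposition~\ref{p-statsets}(a),(e). Your route is slightly more economical, since it avoids the detour through conjugacy classes, but the substance of the argument is the same.
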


\begin{proof}
In the proof of Corollary \ref{c-cvalexccoeff}, we saw that the set
$\mathfrak{S}_{n,i}(\lambda)$ consisting of all permutations in the
conjugacy class $\mathfrak{S}_{n}(\lambda)$ with exactly $i$ cyclic
valleys is invariant under cyclic valley-hopping. Since $\mathfrak{S}_{n,k,i}$
is the union of all the sets $\mathfrak{S}_{n,i}(\lambda)$ over all
partitions $\lambda$ of $n$ with exactly $k$ parts of size 1, it
follows that $\mathfrak{S}_{n,k,i}$ is also invariant under cyclic
valley-hopping. Thus the result follows from Theorem \ref{t-cvalexcgpos}.
\end{proof}
We note that if $\Pi\subseteq\mathfrak{S}_{n}$ is invariant under
cyclic valley-hopping but contains permutations with different numbers
of fixed points, then $E(\Pi;t)$ is not necessarily $\gamma$-positive
but is a sum of $\gamma$-positive polynomials with different centers
of symmetry.

\subsection{Proof of Theorem 1 and related results}

For a set of permutations $\Pi\subseteq\mathfrak{S}_{n}$, let 
\[
E^{(\cval,\exc)}(\Pi;s,t)\coloneqq\sum_{\pi\in\Pi}s^{\cval(\pi)}t^{\exc(\pi)}.
\]
The following theorem allows us to relate the polynomials $E(\Pi;t)$
and $E^{(\cval,\exc)}(\Pi;s,t)$ whenever the subset $\Pi\subseteq\mathfrak{S}_{n,k}$
is invariant under cyclic valley-hopping. Theorem \ref{t-mainthm}
will follow as a corollary.
\begin{thm}
\label{t-cvalexc} Let $\Pi\subseteq\mathfrak{S}_{n,k}$ be invariant
under cyclic valley-hopping. Then 
\begin{equation}
E(\Pi;t)=\left(\frac{1+st}{1+s}\right)^{n-k}E^{(\cval,\exc)}\left(\Pi;\frac{(1+s)^{2}t}{(s+t)(1+st)},\frac{s+t}{1+st}\right).\label{e-cvalexc1}
\end{equation}
Equivalently, 
\begin{equation}
E^{(\cval,\exc)}(\Pi;s,t)=\left(\frac{1+u}{1+uv}\right)^{n-k}E(\Pi;v)\label{e-cvalexc2}
\end{equation}
where $u=\frac{1+t^{2}-2st-(1-t)\sqrt{(1+t)^{2}-4st}}{2(1-s)t}$ and
$v=\frac{(1+t)^{2}-2st-(1+t)\sqrt{(1+t)^{2}-4st}}{2st}.$
\end{thm}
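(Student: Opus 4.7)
My plan is to derive equation (\ref{e-cvalexc1}) directly from Lemma \ref{l-excorb}, and then obtain the equivalent form (\ref{e-cvalexc2}) by algebraically inverting the substitution.

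For the first equation, I will apply Lemma \ref{l-excorb} to each cyclic valley-hopping orbit. Since $\Pi$ is invariant under the action, it decomposes as a disjoint union of orbits $\Orb(\sigma)$, and summing the lemma's identity over all such orbits gives
\[
E(\Pi;t)=\sum_{\pi\in\Pi}\frac{(s+t)^{\exc(\pi)-\cval(\pi)}(1+st)^{n-k-\cval(\pi)-\exc(\pi)}t^{\cval(\pi)}}{(1+s)^{n-k-2\cval(\pi)}}.
\]
The plan is then to factor out $\left(\frac{1+st}{1+s}\right)^{n-k}$ from every summand; what remains rearranges cleanly into $\left(\frac{s+t}{1+st}\right)^{\exc(\pi)}\left(\frac{(1+s)^{2}t}{(s+t)(1+st)}\right)^{\cval(\pi)}$ once the extra $(s+t)^{-\cval(\pi)}$, $(1+st)^{-\cval(\pi)}$, and $(1+s)^{2\cval(\pi)}$ factors are consolidated. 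Recognizing this expression as the summand of $E^{(\cval,\exc)}(\Pi;s',t')$ with $s'=\frac{(1+s)^{2}t}{(s+t)(1+st)}$ and $t'=\frac{s+t}{1+st}$ yields (\ref{e-cvalexc1}) immediately.

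To derive (\ref{e-cvalexc2}), I will invert the substitution $(s,t)\mapsto(s',t')$ from above. Renaming the new variables back to $(s,t)$ and writing $u,v$ for the original $(s,t)$, the task is to solve $s=\frac{(1+u)^{2}v}{(u+v)(1+uv)}$ and $t=\frac{u+v}{1+uv}$ for $u,v$. From the second equation one obtains $u=\frac{t-v}{1-tv}$; using the identities $1+u=\frac{(1+t)(1-v)}{1-tv}$, $u+v=\frac{t(1-v^{2})}{1-tv}$, and $1+uv=\frac{1-v^{2}}{1-tv}$, the first equation collapses to $s=\frac{(1+t)^{2}v}{t(1+v)^{2}}$, which is the quadratic
\[
stv^{2}-\bigl((1+t)^{2}-2st\bigr)v+st=0.
\]
The discriminant factors as $\bigl((1+t)^{2}-4st\bigr)(1+t)^{2}$, so the quadratic formula gives $v=\frac{(1+t)^{2}-2st\pm(1+t)\sqrt{(1+t)^{2}-4st}}{2st}$; the minus sign is selected by checking the limiting case $s=1$, which forces $v=t$. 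Plugging this $v$ into $u=\frac{t-v}{1-tv}$ and clearing the square root by multiplying numerator and denominator by a suitable conjugate recovers the stated expression for $u$. Finally, the prefactor transforms as $\left(\frac{1+st}{1+s}\right)^{n-k}\mapsto\left(\frac{1+uv}{1+u}\right)^{n-k}$ under the inversion, and rearranging produces (\ref{e-cvalexc2}).

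The hard part is the symbolic inversion: both the reduction to a quadratic in $v$ and the subsequent simplification of $u$ involve several nonobvious cancellations. The conceptual work, however, is already done by Lemma \ref{l-excorb}; everything past that point is algebra, with the main subtlety being the branch selection in the square root, which I plan to pin down by the limiting check $s=1\Rightarrow v=t$ (or equivalently by requiring $u,v$ to have power series expansions near $(s,t)=(0,0)$).
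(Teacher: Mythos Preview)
Your proposal is correct and follows essentially the same route as the paper: sum Lemma~\ref{l-excorb} over the orbits comprising $\Pi$, factor to obtain (\ref{e-cvalexc1}), and then invert the change of variables to get (\ref{e-cvalexc2}). The only difference is that the paper delegates the inversion to a computer algebra system, whereas you carry it out by hand via the quadratic in $v$ and a branch check; the algebra you sketch is sound.
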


This theorem is a cyclic analogue of a previous result by the third
author \cite[Theorem 5.1]{Zhuang2017} which relates the distribution
of $\des$ and the joint distribution of $\pk$ (the number of peaks)
and $\des$ over sets of permutations invariant under (ordinary) valley-hopping.
\begin{proof}
Taking Lemma \ref{l-excorb} and summing over all orbits contributing
to $\Pi$ yields
\begin{align*}
\sum_{\pi\in\Pi}t^{\exc(\pi)} & =\sum_{\pi\in\Pi}\frac{(s+t)^{\exc(\pi)-\cval(\pi)}(1+st)^{n-k-\cval(\pi)-\exc(\pi)}t^{\cval(\pi)}}{(1+s)^{n-k-2\cval(\pi)}}\\
 & =\left(\frac{1+st}{1+s}\right)^{n-k}\sum_{\pi\in\Pi}\left(\frac{(1+s)^{2}t}{(s+t)(1+st)}\right)^{\cval(\pi)}\left(\frac{s+t}{1+st}\right)^{\exc(\pi)}
\end{align*}
and thus Equation (\ref{e-cvalexc1}) follows. 

We obtain Equation (\ref{e-cvalexc2}) by setting $u=\frac{(1+s)^{2}t}{(s+t)(1+st)}$
and $v=\frac{s+t}{1+st}$, solving for $s$ and $t$ (which can be
done using a computer algebra system such as Maple), and simplifying.\footnote{We exchanged $u$ and $v$ with $s$ and $t$, respectively, in the
statement of Equation (\ref{e-cvalexc2}) in this theorem, so that
the $(\cval,\exc)$ polynomial would have variables $s$ and $t$
as in its definition.}
\end{proof}
Taking $\Pi=\mathfrak{S}_{n}(\lambda)$ where $\lambda=(1^{m_{1}}2^{m_{2}}\cdots)$,
we obtain from Equation (\ref{e-cvalexc2}) the formula
\[
E_{\lambda}^{(\cval,\exc)}(s,t)=\left(\frac{1+u}{1+uv}\right)^{n-m_{1}}E_{\lambda}(v)
\]
where $u$ and $v$ are defined as in Theorem \ref{t-cvalexc}. Combining
this formula with Brenti's formula (\ref{e-brenti}) proves Theorem
\ref{t-mainthm}.

Finally, we derive a formula analogous to Theorem \ref{t-mainthm}
which allows one to compute the polynomials $E_{\lambda}^{\cval}(t)$
using Eulerian polynomials. Given a set of permutations $\Pi\subseteq\mathfrak{S}_{n}$,
let 
\[
E^{\cval}(\Pi;t)\coloneqq\sum_{\pi\in\Pi}t^{\cval(\pi)}.
\]
\begin{thm}
Let $\Pi\subseteq\mathfrak{S}_{n,k}$ be invariant under cyclic valley-hopping.
Then 
\begin{equation}
E(\Pi;t)=\left(\frac{1+t}{2}\right)^{n-k}E^{\cval}\left(\Pi;\frac{4t}{(1+t)^{2}},\right).\label{e-cval1}
\end{equation}
Equivalently, 
\begin{equation}
E^{\cval}(\Pi;t)=(1+\sqrt{1-t})^{n-k}E(\Pi;w)\label{e-cval2}
\end{equation}
where $w=2t^{-1}(1-\sqrt{1-t})-1$.
\end{thm}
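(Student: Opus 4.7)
The plan is to derive Equation (\ref{e-cval1}) as a specialization of Theorem \ref{t-cvalexc}, then invert the indicated substitution algebraically to obtain Equation (\ref{e-cval2}). First, I set $s = 1$ in Equation (\ref{e-cvalexc1}). The prefactor $\left(\frac{1+st}{1+s}\right)^{n-k}$ becomes $\left(\frac{1+t}{2}\right)^{n-k}$, the first argument $\frac{(1+s)^{2}t}{(s+t)(1+st)}$ collapses to $\frac{4t}{(1+t)^{2}}$, and the second argument $\frac{s+t}{1+st}$ collapses to $1$. Since $E^{(\cval,\exc)}(\Pi;u,1) = \sum_{\pi \in \Pi} u^{\cval(\pi)} = E^{\cval}(\Pi;u)$, this immediately yields Equation (\ref{e-cval1}).

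Alternatively, Equation (\ref{e-cval1}) follows in one line from Theorem \ref{t-cvalexcgpos}: substituting the formula $\gamma_i = 2^{-(n-k-2i)}|\{\pi \in \Pi : \cval(\pi) = i\}|$ and using the identity $t^{i}(1+t)^{n-k-2i}\cdot 2^{-(n-k-2i)} = \left(\frac{1+t}{2}\right)^{n-k}\left(\frac{4t}{(1+t)^{2}}\right)^{i}$ to factor out $\left(\frac{1+t}{2}\right)^{n-k}$ recovers the claimed expression.

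For Equation (\ref{e-cval2}), I invert the substitution $y \mapsto \frac{4y}{(1+y)^{2}}$. Writing the new variable as $t$ and the old one as $y$, the equation $t = \frac{4y}{(1+y)^{2}}$ rearranges to the quadratic $ty^{2} + (2t-4)y + t = 0$, with roots $y = \frac{2-t \pm 2\sqrt{1-t}}{t}$. The root tending to $0$ as $t \to 0$ is $y = w = 2t^{-1}(1-\sqrt{1-t}) - 1$. A short computation gives $1 + w = \frac{2(1-\sqrt{1-t})}{t}$, and after rationalizing the numerator by multiplying by $\frac{1+\sqrt{1-t}}{1+\sqrt{1-t}}$, I obtain $\frac{2}{1+w} = 1 + \sqrt{1-t}$. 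Substituting $y = w$ into Equation (\ref{e-cval1}) and solving for $E^{\cval}(\Pi;t)$ then gives Equation (\ref{e-cval2}).

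The main obstacle is minor and purely algebraic: choosing the correct branch of the square root when inverting the quadratic, and verifying the identity $\frac{2}{1+w} = 1+\sqrt{1-t}$. Aside from these routine manipulations, the theorem follows at once by specialization of earlier results, in direct parallel to the proof of Theorem \ref{t-cvalexc}.
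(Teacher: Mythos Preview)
Your proof is correct and follows essentially the same approach as the paper: specialize Equation~(\ref{e-cvalexc1}) at $s=1$ to obtain Equation~(\ref{e-cval1}), then invert the substitution to recover Equation~(\ref{e-cval2}). You simply supply the algebraic details that the paper omits (and your alternative derivation via Theorem~\ref{t-cvalexcgpos} is a nice bonus, though ultimately equivalent).
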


\begin{proof}
Equation (\ref{e-cval1}) is obtained by taking Equation (\ref{e-cvalexc1})
and setting $s=1$. Inverting Equation (\ref{e-cval1}) and simplifying
the result yields Equation (\ref{e-cval2}).
\end{proof}
\begin{thm}
\label{t-mainresult2} Let $\lambda=(1^{m_{1}}2^{m_{2}}\cdots)$ be
a partition of $n$. Then
\begin{equation}
E_{\lambda}^{\cval}(t)=\frac{n!}{z_{\lambda}}(1+\sqrt{1-t})^{n-m_{1}}\prod_{i\geq1}\left[\frac{A_{i-1}(w)}{(i-1)!}\right]^{m_{i}}\label{e-cvalexc-1}
\end{equation}
where $w=2t^{-1}(1-\sqrt{1-t})-1$.
\end{thm}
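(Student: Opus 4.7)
The plan is to deduce Theorem \ref{t-mainresult2} as a direct corollary of the preceding theorem (Equation (\ref{e-cval2})) combined with Brenti's formula (\ref{e-brenti}), mirroring the way Theorem \ref{t-mainthm} is obtained at the end of the previous subsection.

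First I would specialize Equation (\ref{e-cval2}) to $\Pi = \mathfrak{S}_n(\lambda)$. By Proposition \ref{p-conjinvariant}, the conjugacy class $\mathfrak{S}_n(\lambda)$ is invariant under cyclic valley-hopping, so the hypothesis of the previous theorem is satisfied. The number of fixed points of any permutation in $\mathfrak{S}_n(\lambda)$ equals the number of parts of $\lambda$ of size $1$, i.e., $k = m_1$, so that $\mathfrak{S}_n(\lambda) \subseteq \mathfrak{S}_{n,m_1}$. Substituting into (\ref{e-cval2}) gives
\[
E_{\lambda}^{\cval}(t) = (1+\sqrt{1-t})^{n-m_1}\, E_\lambda(w),
\]
with $w = 2t^{-1}(1-\sqrt{1-t}) - 1$ as in the theorem.

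Next I would apply Brenti's formula (\ref{e-brenti}) to rewrite $E_\lambda(w)$. This yields
\[
E_\lambda(w) = \frac{n!}{z_\lambda} \prod_{i \geq 1}\left[\frac{A_{i-1}(w)}{(i-1)!}\right]^{m_i},
\]
and plugging this into the previous display produces exactly the stated formula (\ref{e-cvalexc-1}).

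There is essentially no obstacle: the substantive work has already been carried out in Lemma \ref{l-excorb} and in the previous theorem that establishes (\ref{e-cval2}), while Brenti's formula is cited. The only thing to verify is the bookkeeping that $k = m_1$ for any $\pi \in \mathfrak{S}_n(\lambda)$, which is immediate from the definition of cycle type. Hence the proof is just a two-line combination of Equation (\ref{e-cval2}) with Brenti's formula.
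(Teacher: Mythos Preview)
Your proposal is correct and matches the paper's proof essentially verbatim: the paper simply specializes Equation~(\ref{e-cval2}) to $\Pi=\mathfrak{S}_{n}(\lambda)$ (where $k=m_{1}$) and then substitutes Brenti's formula~(\ref{e-brenti}) for $E_{\lambda}(w)$.
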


\begin{proof}
This is proven by taking Equation (\ref{e-cval2}) with $\Pi=\mathfrak{S}_{n}(\lambda)$
and combining the result with Brenti's formula (\ref{e-brenti}).
\end{proof}
\noindent \vspace{0in}

\noindent \textbf{Acknowledgements.} We thank Kyle Petersen for his
helpful feedback and an anonymous referee for pointing out a couple
mistakes on an earlier version of this manuscript.\vspace{0.3in}

\bibliographystyle{plain}
\bibliography{bibliography}

\providecommand\noopsort[1]{}
\begin{thebibliography}{10}

\bibitem{Athanasiadis2017}
Christos~A. Athanasiadis.
\newblock Gamma-positivity in combinatorics and geometry.
\newblock {\em S\'{e}m. Lothar. Combin.}, 77:Art. B77i, 64 pp., 2018.

\bibitem{Braenden2008}
Petter Br{\"a}nd{\'e}n.
\newblock Actions on permutations and unimodality of descent polynomials.
\newblock {\em European J. Combin.}, 29(2):514--531, 2008.

\bibitem{Brenti1993}
Francesco Brenti.
\newblock Permutation enumeration symmetric functions, and unimodality.
\newblock {\em Pacific J. Math.}, 157(1):1--28, 1993.

\bibitem{Foata1970}
Dominique Foata and Marcel-P. Sch\"{u}tzenberger.
\newblock {\em Th\'{e}orie g\'{e}om\'{e}trique des polyn\^{o}mes
  eul\'{e}riens}.
\newblock Lecture Notes in Mathematics, Vol. 138. Springer-Verlag, Berlin-New
  York, 1970.

\bibitem{Foata1974}
Dominique Foata and Volker Strehl.
\newblock Rearrangements of the symmetric group and enumerative properties of
  the tangent and secant numbers.
\newblock {\em Math. Z.}, 137:257--264, 1974.

\bibitem{Gessel1993}
Ira~M. Gessel and Christophe Reutenauer.
\newblock Counting permutations with given cycle structure and descent set.
\newblock {\em J. Combin. Theory Ser. A}, 64(2):189--215, 1993.

\bibitem{Petersen2015}
T.~Kyle Petersen.
\newblock {\em Eulerian Numbers}.
\newblock Birkh\"auser/Springer, New York, 2015.

\bibitem{Shapiro1983}
Louis~W. Shapiro, Wen-Jin Woan, and Seyoum Getu.
\newblock Runs, slides, and moments.
\newblock {\em SIAM J. Alg. Disc. Meth.}, 4(4):459--466, 1983.

\bibitem{Shin2012}
Heesung Shin and Jiang Zeng.
\newblock The symmetric and unimodal expansion of {E}ulerian polynomials via
  continued fractions.
\newblock {\em European J. Combin.}, 33(2):111--127, 2012.

\bibitem{Sun2014}
Hua Sun and Yi~Wang.
\newblock A group action on derangements.
\newblock {\em Electron. J. Combin.}, 21(1):Paper 1.67, 5, 2014.

\bibitem{Tirrell2018}
Jordan~O. Tirrell and Yan Zhuang.
\newblock Hopping from {C}hebyshev polynomials to permutation statistics.
\newblock {\em Electron. J. Combin.}, 26(3):Paper 3.27, 17 pp., 2019.

\bibitem{Zeng1993}
Jiang Zeng.
\newblock \'{E}num\'{e}rations de permutations et {$J$}-fractions continues.
\newblock {\em European J. Combin.}, 14(4):373--382, 1993.

\bibitem{Zhuang2017}
Yan Zhuang.
\newblock Eulerian polynomials and descent statistics.
\newblock {\em Adv. in Appl. Math.}, 90:86--144, 2017.

\end{thebibliography}

\end{document}